\documentclass[10pt]{amsart}
\usepackage{amsmath, amscd, amsthm} 
\usepackage{amssymb, amsfonts} 
\usepackage{color}
\usepackage[all]{xy} 
\subjclass[2010]{14C35, 14F42, 19E15}
\setcounter{secnumdepth}{2}
\setcounter{tocdepth}{1}

\usepackage{marginnote}
\usepackage{aliascnt}
\usepackage[svgnames]{xcolor} 
\usepackage{hyperref}
\usepackage{dsfont} 
\definecolor{dark-red}{rgb}{0.4,0.15,0.15}
\hypersetup{
	colorlinks, linkcolor=dark-red,
	citecolor=DarkBlue, urlcolor=MediumBlue
}

\definecolor{refkey}{gray}{0.5}
\definecolor{labelkey}{gray}{0.5}


\newcommand{\A}{\mathds{A}}
\renewcommand{\P}{\mathds{P}}
\newcommand{\Z}{\mathds{Z}}

\newcommand{\G}{\mathds{G}}

\newcommand{\ZZ}{\mathcal{Z}}
\newcommand{\HH}{\mathrm{H}}
\newcommand{\VV}{\mathcal{V}}
\newcommand{\WW}{\mathcal{W}}

\newcommand{\et}{\acute{e}t}

\newcommand{\iso}{\cong}

\newcommand{\mcal}[1]{\mathcal{#1}}


\DeclareMathOperator*{\colim}{\mathrm{colim}}

\DeclareMathOperator{\spec}{\mathrm{Spec}}
\DeclareMathOperator{\proj}{\mathrm{Proj}}

\DeclareMathOperator{\supp}{Supp}
\newcommand{\Sym}{\mathrm{Sym}}
\newcommand{\Div}{\mathrm{Div}}
\newcommand{\Pic}{\mathrm{Pic}}

\newcommand{\Ab}{\mathrm{Ab}}
\newcommand{\Sus}{\mathrm{Sus}}
\newcommand{\red}{\mathrm{red}}

\newcommand{\Vect}{\mathrm{Vect}}

\DeclareMathOperator{\Hom}{Hom}

\DeclareMathOperator{\Cor}{Cor}
\DeclareMathOperator{\Aut}{Aut}
\DeclareMathOperator{\ch}{char}

\DeclareMathOperator{\tr}{Tr}

\newcommand{\Sch}{\mathrm{Sch}}
\newcommand{\Sm}{\mathrm{Sm}}

\numberwithin{equation}{section} 

\theoremstyle{plain}

\newaliascnt{theorem}{equation}  
\newtheorem{theorem}[theorem]{Theorem}  
\aliascntresetthe{theorem}

\newaliascnt{proposition}{equation}  
\newtheorem{proposition}[proposition]{Proposition}
\aliascntresetthe{proposition}

\newaliascnt{lemma}{equation}  
\newtheorem{lemma}[lemma]{Lemma}
\aliascntresetthe{lemma}

\newaliascnt{corollary}{equation}  
\newtheorem{corollary}[corollary]{Corollary}
\aliascntresetthe{corollary}

\newaliascnt{claim}{equation}  

\aliascntresetthe{claim}

\newaliascnt{conjecture}{equation}  

\aliascntresetthe{conjecture}

\theoremstyle{definition}

\newaliascnt{definition}{equation}  
\newtheorem{definition}[definition]{Definition}
\aliascntresetthe{definition}

\newaliascnt{example}{equation}  
\newtheorem{example}[example]{Example}
\aliascntresetthe{example}

\newaliascnt{remark}{equation}  
\newtheorem{remark}[remark]{Remark}
\aliascntresetthe{remark}

\newcommand{\aref}[1]{\autoref{#1}}

\begin{document}
\title{Rigidity for Equivariant Pseudo Pretheories}
\author{Jeremiah Heller}
\address{University of Illinois Urbana-Champaign}
\email{jbheller@illinois.edu}
\author{Charanya Ravi}
\address{University of Oslo}
\email{charanyr@math.uio.no}
\author{Paul Arne {\O}stv{\ae}r}
\address{University of Oslo}
\email{paularne@math.uio.no}

\begin{abstract} 
We prove versions of the Suslin and Gabber rigidity theorems in the setting of equivariant pseudo pretheories of smooth schemes over a field with an action of a finite group.
Examples include equivariant algebraic $K$-theory, presheaves with equivariant transfers, equivariant Suslin homology, and Bredon motivic cohomology. 
\end{abstract}
\maketitle

\section{Introduction} \label{sec:intro}

The classical rigidity theorems for algebraic $K$-theory are due to Suslin \cite{Sus83} for extensions of algebraically closed fields, 
Gabber \cite{Gab92} for Hensel local rings, 
and Gillet-Thomason \cite{GT84} for strictly Hensel local rings. 
All known proofs rely on $\A^{1}$-homotopy invariance and existence of transfer maps with certain nice properties.
In his work on motives,  
Voevodsky introduced homotopy invariant pretheories as contravariant functors on smooth schemes over a field enjoying certain transfer maps \cite[Definition~3.1]{MR1764200}.
While algebraic $K$-theory admits transfer maps for relative smooth curves, it is not an example of a pretheory \cite[\S3.4]{MR1764200}.
However, 
it is the motivating example of a pseudo pretheory in the sense of Friedlander-Suslin \cite[Section~10]{FS02}.
The work of Suslin-Voevodsky \cite{SV96} established rigidity theorems in the context of homotopy invariant pseudo pretheories.

In this paper, we generalize the notion of pseudo pretheories to the equivariant setting of finite group actions (\aref{def:pseudo}).
Equivariant algebraic $K$-theory is an example, as well as equivariant Suslin homology, and Bredon motivic cohomology in the sense of  \cite[Section~5]{HVO}.

Our main results establish equivariant analogs of the Suslin-Voevodsky rigidity theorems in \cite[Section~4]{SV96} (see  \aref{thm:suslinrigid}, \aref{thm:gabberrigid}). 

\begin{theorem} \label{thm:Mainthm}
Let $k$ be a field, 
$G$ be a finite group whose order is invertible in $k$, 
and let $\Sm^G_k$ denote the category of smooth schemes over $k$ equipped with an action of $G$.
Let $F$ be a homotopy invariant equivariant pseudo pretheory on $\Sm^G_k$. 
Suppose that $F$ is torsion of exponent coprime to $\ch(k)$. 
\begin{enumerate}
\item 
Let $S = \spec(\mathcal{O}^h_{W,Gw})$ be the Henselization of a smooth affine $G$-scheme 
$W$ at the orbit $Gw$ of a closed point.
Let $X \to S$ be a smooth affine G-scheme of relative dimension one, 
admitting an equivariant good compactification. 
Then for all equivariant sections $i_1, i_2 : S \to X $ which coincide on the closed orbit of $S$, 
we have 
$$
i_1^* = i_2^* : F(X) \to F(S).
$$  
\item 
Let $X$ be a smooth affine $G$-scheme and let $x\in X$ be a closed point
such that $k \subseteq k(x)$ is separable.
If every representation of $G$ over $k$ is a direct sum of one dimensional representations,
then there is a naturally induced isomorphism
$$
F(Gx) \xrightarrow{\iso} F(\spec(\mcal{O}_{X,Gx}^{h})).
$$ 
\end{enumerate}
\end{theorem}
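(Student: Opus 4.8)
These two assertions are the equivariant Suslin and Gabber rigidity statements, and the plan is to establish (1) first and then bootstrap (2) from it. To prove (1), I would exploit the equivariant transfer structure attached to the relative curve. Since $\bar X\to S$ is a smooth proper equivariant relative curve compactifying $X$ with $\bar X\setm X$ finite over $S$, the pseudo pretheory structure of $F$ furnishes transfer maps $F(X)\to F(S)$ indexed by effective relative Cartier divisors that are finite over $S$ and supported in $X$. The crucial formal step is to check that the resulting pairing factors through relative rational equivalence, i.e. depends only on the class in the relative Picard group $\Pic(\bar X/S)$, with homotopy invariance of $F$ being exactly what forces a relatively principal divisor to act trivially. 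Each section $i_j\colon S\to X$ then acts through the class of its graph $\Gamma_{i_j}$, a degree-one relative divisor, and the hypothesis that $i_1,i_2$ agree on the closed orbit says that $\Gamma_{i_1}$ and $\Gamma_{i_2}$ restrict to the same divisor on the closed fiber over $Gw$. Because $S$ is Henselian local and $F$ is torsion of exponent coprime to $\ch(k)$, I expect rigidity of the relative Picard group over a Henselian base, combined with the torsion-ness of $F$, to force the class $\Gamma_{i_1}-\Gamma_{i_2}$ to act trivially, giving $i_1^*=i_2^*$.

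For (2), the first move is to produce the asserted natural map and reduce its invertibility to (1). As $X$ is smooth over $k$ and $k\subseteq k(x)$ is separable, the Henselian semilocal ring $A=\mathcal{O}^h_{X,Gx}$ carries a coefficient field; averaging such a splitting over $G$, which is legitimate since $|G|$ is invertible in $k$, yields a $G$-equivariant retraction $r\colon S=\spec(A)\to Gx$ of the closed immersion $i\colon Gx\hookrightarrow S$. The map in the statement is $r^*$, and $r\circ i=\mathrm{id}$ gives $i^*r^*=\mathrm{id}$, so $r^*$ is split injective; what remains is the identity $r^*i^*=\mathrm{id}_{F(S)}$, equivalently that the two endomorphisms $i\circ r$ and $\mathrm{id}_S$ of $S$ induce the same map on $F$. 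Since these two endomorphisms agree on the closed orbit, the task is to view them as equivariant sections of a relative curve and invoke (1).

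I would carry this out by induction on $\dim X$, realizing $A$ as an iterated Henselization along equivariant smooth relative curves. This is where the representation-theoretic hypothesis is indispensable: because every $G$-representation over $k$ decomposes into one-dimensional summands, one can choose $G$-equivariant local coordinates at the orbit and so produce a $G$-equivariant smooth morphism $X\to Y$ of relative dimension one onto a smooth $G$-scheme of dimension one less, compatible with $Gx$ and admitting an equivariant good compactification after Henselizing the base. Applying (1) over $S'=\spec(\mathcal{O}^h_{Y,Gy})$ annihilates the discrepancy in the curve direction while the inductive hypothesis over $S'$ handles the base direction, and together they yield $i^*=(r^*)^{-1}$.

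I expect the main obstacle to be precisely this equivariant dimension reduction. Classically one simply projects to affine space after choosing coordinates, but equivariantly this forces a linearization of the $G$-action near the orbit, which is exactly what the splitting-into-one-dimensionals hypothesis supplies; arranging simultaneously that the relative curve is $G$-equivariant, smooth, compatible with $Gx$, and equipped with an equivariant good compactification carrying the trivialization needed for the transfers is the technical heart of the argument. A secondary difficulty, internal to (1), is the equivariant rigidity of the relative Picard group over a Henselian local base with torsion coefficients, where the $G$-action on line bundles and the compatibility of the averaging operator with the torsion exponent must be controlled.
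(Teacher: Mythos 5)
Your argument for part (1) is, in substance, the paper's own proof: the transfers pair $F(X)$ against relative equivariant divisor classes, homotopy invariance forces this pairing to factor through $\HH_{0}^{\Sus}(G;X/S)\iso\Div^{G}_{rat}(\overline{X},X_{\infty})$ (\aref{lem:sustr} and \eqref{eqn:SusHom}), and the ``rigidity of the relative Picard group over a Henselian base'' that you postulate is exactly \aref{prop:Hsusinj}, proved via the equivariant proper base change theorem (\aref{thm:basechange}). Likewise, your skeleton for part (2) --- split injectivity of $r^{*}$, induction on $\dim X$, and equivariant linearization of coordinates as the place where the one-dimensional-summands hypothesis enters (cf.\ \aref{lem:etnbd}) --- matches the paper. (One secondary imprecision: ring homomorphisms cannot be averaged, so ``averaging a coefficient field'' is not literally legitimate; the paper instead averages the $L\wr G$-module splitting of $m\to m/m^{2}$ and linearizes \'etale-locally, which is the correct fix.)

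The genuine gap is in the mechanism by which you invoke (1) inside the induction for part (2). You propose ``to view $i\circ r$ and $\mathrm{id}_{S}$ as equivariant sections of a relative curve and invoke (1),'' applying (1) over $S'=\spec(\mcal{O}^{h}_{Y,Gy})$. Neither half of this works. To realize two endomorphisms of $S$ as sections of one relative curve, one must pull a curve $U\to Y$ back along a structure map $S\to Y$ and ask that both endomorphisms, composed with $S\to U\to Y$, return that same structure map. This is impossible for the pair $(\mathrm{id}_{S},\, i\circ r)$ whenever $\dim X\geq 2$: the composite through $i\circ r$ factors through the finite scheme $Gx$, while the composite through $\mathrm{id}_{S}$ is the structure map $S\to Y$ itself, whose image is not finite once $\dim Y\geq 1$. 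Hence (1) can never compare $\mathrm{id}_{S}$ with the full retraction onto $Gx$ in one step; the discrepancy has to be killed one codimension at a time, comparing $\mathrm{id}_{S}$ with the retraction $\iota\circ q$ through the codimension-one Henselization, where $q\colon S\to\spec(\mcal{O}^{h}_{\WW,0})$ and $\iota\colon\spec(\mcal{O}^{h}_{\WW,0})\to S$ are induced by an equivariant splitting $\VV\iso\WW\oplus\VV'$, and induction then carries one from $\spec(\mcal{O}^{h}_{\WW,0})$ down to $Gx$.

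Moreover, even for this one-codimension step, (1) must be applied over $S=\spec(\mcal{O}^{h}_{X,Gx})$ itself --- the full $d$-dimensional Henselian base --- and not over the $(d-1)$-dimensional $S'$. The paper's argument runs: given $[\alpha]\in F(S)$ with $\iota^{*}[\alpha]=0$, use $F(S)=\colim_{U}F(U)$ (a step absent from your sketch) to write $[\alpha]=\pi^{*}\alpha$ with $\alpha\in F(U)$ for an affine equivariant \'etale neighborhood $U$; equip $U$, after shrinking, with a smooth affine equivariant curve structure $U\to Y$, $Y\subseteq\WW$ invariant open, admitting good compactification (\aref{lem:useinrig}); pull back along $S\to Y$ to obtain $\tilde{U}\to S$; then $\pi$ and $\pi\circ\iota\circ q$ induce two sections of $\tilde{U}\to S$ agreeing on the closed orbit, and (1) applied over $S$ yields $[\alpha]=q^{*}\iota^{*}[\alpha]=0$. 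Rigidity over $S'$ only compares maps $S'\to X\times_{Y}S'$; such $(d-1)$-dimensional probes cannot detect whether a class in the colimit $F(S)$ vanishes, so your inductive step does not close. This is also precisely why the paper proves (1) for Henselizations of smooth $G$-schemes of arbitrary dimension: in the induction for (2) it is invoked with a base of dimension $d$, not $d-1$.
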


The condition in the second part of the theorem is satisfied whenever $G$ is abelian and $k$ contains a primitive $d$th root of unity, where $d$ is the exponent of the group, by a theorem of Brauer, see e.g., \cite[Theorem 41.1, Corollary 70.24]{CR}.

Rigidity theorems have been established for equivariant algebraic $K$-theory in \cite{YO09} and \cite[Theorem~1.4]{Kri10} at points with trivial stabilizers.
The novelty in \aref{thm:Mainthm} is that we allow points with \emph{nontrivial} stabilizers.
Note,
however, 
that in \cite{YO09} the groups are more general, 
and \cite{Kri10} deals with connected split reductive groups.
For works on rigidity results in related contexts, 
see e.g., 
\cite{AD}, \cite{MR3205601}, \cite{MR3477640}, \cite{HY07}, \cite{UJ}, \cite{MR3443257}, \cite{PY02}, \cite{RO06}, \cite{MR2399164}, \cite{Tab}, and \cite{MR2854332}.

A brief overview of the paper follows.
 \aref{sec:Prelim} recalls notions in $G$-equivariant algebraic geometry and shows an equivariant proper base change theorem for \'etale cohomology of Henselian pairs.
After recalling equivariant divisors and equivariant correspondences, 
we define and give examples of equivariant pseudo pretheories in  \aref{sec:EqDiv}.
Next in   \aref{sec:GComp} we discuss the equivariant Nisnevich topology and equivariant good compactification for smooth affine relative curves.
Our main results are shown in   \aref{sec:rigidity}.
Finally, 
in  \aref{sec:Gerst-resol} we show that exactness of the Gersten complex for equivariant algebraic $K$-theory fails for the group $G=\Z/2\Z$ of order two acting on the
affine line $\A^1_k = \spec(k[t])$ by $t \mapsto -t$.
This follows by applying rigidity to the $G$-equivariant Grothendieck group $K_0^G$ of the Henselization $\mcal{O}^h_{\A^1_k,Gx}$ at the orbit of the closed point $x=(t)\in \A^1_k$.

\subsection*{Acknowledgements.}
Work on this paper took place at the Institut Mittag-Leffler during Spring 2017.
We thank the institute for its hospitality and support.
The authors gratefully acknowledge funding from the RCN Frontier Research Group Project no. 250399 ``Motivic Hopf equations." 
Heller is supported by NSF Grant  DMS-1710966.
{\O}stv{\ae}r is supported by a Friedrich Wilhelm Bessel Research Award from the Humboldt Foundation and a Nelder Visiting Fellowship from Imperial College London.
The authors would like to thank the referee for a careful reading of this paper and for an 
insightful comment about the equivariant Gersten complex, which is included in the text as 
Remark \ref{rem:Gersten}.

\section{Preliminaries} 
\label{sec:Prelim}
Throughout $k$ is a field and $G$ is a finite group whose order is coprime to $\ch(k)$
(abusing the terminology we say that $n$ is coprime to $\ch(k)$ if 
$n$ is coprime to the exponential characteristic of $k$, i.e., $n$ is invertible in $k$).
We view $G$ as a group scheme  $\coprod_G \spec(k)$ over $\spec(k)$. 
Let $\Sch^G_k$ be the category of separated, finite type schemes over $\spec(k)$ equipped with a left $G$-action, and equivariant morphisms. 
The smooth $G$-schemes over $\spec(k)$ form a full subcategory $\Sm^G_k\subseteq \Sch^G_k$.
A $G$-scheme $X$ is equivariantly irreducible if there exists an irreducible component $X_0$ of $X$ such that $G \cdot X_0 = X$. 
The fiber product $X \times Y$ of $X, Y \in \Sch^G_k$ is a $G$-scheme with the diagonal $G$-action.
For a finite dimensional $k$-vector space $V$, 
let $\A(V):= \spec(\Sym(V^\vee))$ and $\P(V) := \proj(\Sym(V^\vee))$. 
If $V$ is a $G$-representation over $k$,
we view $\A(V)$ and $\P(V)$ as $G$-schemes via the $G$-action on $V$.

For $X\in\Sch^G_k$ we denote the categorical quotient of $X$ by $G$ (in the sense of \cite[Definition~0.5]{MFK}) by $X/G$, provided it exists.
Since $G$ is a finite group, the categorical quotient map $\pi: X \to X/G$ is in fact a uniform geometric quotient (\cite[Definitions~0.6, 0.7]{MFK}).
If $X$ is quasi-projective, then a quotient by a finite group $\pi: X \to X/G$ always exists.

Let $H\subseteq G$ be a subgroup and $X\in\Sch^H_k$. 
Then $G\times X$ is an $H$-scheme with the action $h(g,x) = (gh^{-1}, hx)$, 
and we define $G \times^{H} X := (G \times X)/H$.
The scheme $G \times^{H} X$ has a left $G$-action through the action of $G$ on itself. 
Since the $H$-action on $G \times X$ is free, $\pi: G \times X \to G \times^{H} X$ is a principle $H$-bundle. 
In particular, 
$\pi$ is \'etale and surjective. 
It follows that if $X$ is smooth, 
then so is $G \times^{H} X$. 
This defines a left adjoint to the restriction functor $\Sm^G_k \to \Sm^H_k$,
given by $G \times^{H}- : \Sm^H_k \to \Sm^G_k$.

For $X\in\Sch^G_k$ and $x \in X$ a point, the {\sl set-theoretic stabilizer} of $x$ is the subgroup $G_x \subseteq G$ defined by $G_x = \{g \in G| g·x = x \}$.
The {\sl orbit} of $x$ is $G·x := G \times^{G_x} \{x\}$, 
with underlying set $\{g·x | g \in G\}$.

\subsection{{$G$}-sheaves} \label{sub:Gshv}

A $G$-sheaf on $X$ is basically a sheaf with a $G$-action which is compatible with the $G$-action on $X$. 
The precise definition goes as follows.

\begin{definition} \label{def:GShv}
Let $\tau$ be a Grothendieck topology on $X$ and $\mcal{F}$ a $\tau$-sheaf of abelian groups. 
Write ${\rm pr}_{2}:G\times X\to X$ for the projection and $\mu:G\times X\to X$ for the action map.
\begin{enumerate}
\item 
A \emph{$G$-linearization} of $\mcal{F}$ is an isomorphism
$
\phi:\mu^{*}\mcal{F}\xrightarrow{\iso} {\rm pr}_{2}^{*}\mcal{F}
$
of sheaves on $G\times X$ which satisfies the cocycle condition  
$
{\rm pr}_{23}^{*}(\phi)\circ(Id_G \times\mu)^{*}(\phi) = (m\times Id_X)^{*}(\phi)
$ 
on $G\times G \times X$. 
Here $m:G\times G\to G$ is the multiplication and ${\rm pr}_{23}:G\times G\times X \to G\times X$ is the projection to second and third factors. 
\item 
A \textit{{$G$}-sheaf} (in the $\tau$-topology) on $X$ is a pair consisting of a $\tau$-sheaf $\mcal{F}$ together with a $G$-linearization $\phi$ of $\mcal{F}$. 
We simply write $\mcal{F}$ for a $G$-sheaf,
leaving the $G$-linearization understood.
\item 
A \textit{{$G$}-module} $\mcal{M}$ on $X$ is a $G$-sheaf  
where $\mcal{M}$ is a quasi-coherent $\mcal{O}_{X}$-module and the $G$-linearization $\phi:\mu^{*}\mcal{M}\iso pr_{2}^{*}\mcal{M}$ is an  $\mcal{O}_{G\times X}$-module isomorphism. 
A \textit{$G$-vector bundle} on $X$ is a $G$-module  $\mcal{V}$ whose underlying quasi-coherent $\mcal{O}_{X}$-module is locally free. 
\end{enumerate}
\end{definition}

\begin{remark}
Since $G$ is finite, 
the data of a $G$-linearization of $\mcal{F}$ is equivalent to giving a sheaf isomorphism $\phi_g : \mcal{F} \xrightarrow{\iso} g_*\mcal{F}$ for each $g \in G$ subject to the conditions $\phi_e = id$
and $\phi_{gh} = h_*(\phi_g) \circ \phi_h$ for all $g, h \in G$.
\end{remark}
\begin{remark}\label{rem:skew}
Recall that if $G$ acts on a commutative ring $R$, the skew group ring $R\wr G$ is the free left $R$-module with basis $\{[g]\,|\, g\in G\}$ and multiplication is defined by setting
$(r[g])(s[h]) = r(g\smash\cdot s)[gh]$ and extending linearly. If $G$ acts trivially on $R$, then $R\wr G$ is simply the usual group ring $RG$.

If $X=\spec(R)$, then the category of $G$-modules on $X$ is equivalent to the category of left $R\wr G$-modules. Further, if the order of $G$ is invertible in $R$, then the category of $G$-vector bundles on $X$ is equivalent to the category of left $R\wr G$-modules which are projective as $R$-modules.  See e.g., \cite[Section 1.1]{LS08} for details.
\end{remark}

A {\emph{$G$-equivariant morphism} $f:(\mcal{E},\phi_{\mcal{E}}) \to (\mcal{F},\phi_{\mcal{F}})$ of $G$-sheaves is a morphism $f: \mcal{E} \to \mcal{F}$ of sheaves compatible with the 
$G$-linearizations in the sense that $\phi_{\mcal{F}}\circ \mu^{*}f = pr_{2}^{*}f\circ \phi_{\mcal{E}}$, 
or equivalently $\phi_g \circ f = g_*(f) \circ \phi_g$ for all $g \in G$.
Write $Ab_{\tau}(G,X)$ for the category of $G$-sheaves on $X$ in the $\tau$-topology. 
We note that $Ab_{\tau}(G,X)$ has enough injectives. 

Given a $G$-sheaf $(\mcal{F}, \phi_g)$, 
the morphisms $\phi_g$ induce an action of the group $G$ on the group of global sections $\Gamma(X,\mcal{F})$.
We write $\Gamma^{G}_{X}(\mcal{F}) = \Gamma(X,\mcal{F})^{G}$ for the set of $G$-invariants of $\Gamma(X,\mcal{F})$. 
This defines a functor $\Gamma^{G}_{X}:\Ab_{\tau}(G,X) \to \Ab$ from the category of $G$-sheaves to the category of abelian groups. 
The $\tau$-$G$-cohomology groups $H^{p}_{\tau}(G;X,\mcal{M})$ are defined as right derived functors
$$
H^{p}_{\tau}(G;X, \mcal{F}) := R^{p}\Gamma^{G}_{X}(\mcal{F}).
$$
Here $\Gamma^{G}_{X} = (-)^{G}\circ \Gamma(X,-)$ is a composite of left exact functors. 
Since the global sections functor $\Gamma(X,-)$ sends injective $G$-sheaves to injective $\Z[G]$-modules, 
the Grothendieck spectral sequence for this composition yields the bounded, convergent spectral sequence 
\begin{equation}
\label{eqn:ss}
E_{2}^{p,q} = H^{p}(G, H^{q}_{\tau}(X,\mcal{F})) \Rightarrow H^{p+q}_{\tau}(G;X,\mcal{F}),
\end{equation}
where $H^{*}(G, - )$ denotes the group cohomology of $G$. Moreover, the spectral sequence
induces a finite filtration on each $H^{n}_{\tau}(G;X,\mcal{F})$.

\begin{definition} 
\label{def:eqpic}
The {\sl $G$-equivariant Picard group} $\Pic^G(X)$ of $X$ is the group of $G$-line bundles on $X$ modulo equivariant isomorphisms, 
with group operation given by tensor product. 
For an invariant closed subscheme $Y \subseteq X$, 
let $\Pic^G(X,Y)$ denote the group consisting of pairs $(\mcal{L}, \phi)$, 
where $\mcal{L}$ is a $G$-line bundle on $X$ and $\phi: \mcal{O}_Y \xrightarrow{\cong} \mcal{L}|_Y$ is an isomorphism of $G$-line bundles on $Y$, 
modulo equivariant isomorphisms respecting the trivializations on $Y$. 
The group $\Pic^G(X,Y)$ is called the {\sl relative equivariant Picard group} of $X$ relative to $Y$.
\end{definition}

The following cohomological interpretations of the equivariant and the relative equivariant Picard groups are standard, 
see \cite[Theorem 2.7, Lemma 6.7]{HVO}.

\begin{theorem}\label{thm:H90}
Let $X$ be a $G$-scheme. 
\begin{enumerate}
\item 
There is a natural isomorphism
$
\Pic^{G}(X) \xrightarrow{\iso} H^{1}_{\et}(G;X,\mcal{O}^{*}_{X}).
$
\item Let $i:Y\hookrightarrow X$ be an invariant closed 
subscheme. Then there is a natural isomorphism
$
\Pic^{G}(X,Y) \xrightarrow{\iso} H^{1}_{\et}(G;X,\G_{X,Y}),
$
where $\G_{X,Y}$ is the \'etale $G$-sheaf defined to be the kernel of the equivariant homomorphism $\mcal{O}^{*}_{X} \to i_{*}\mcal{O}^{*}_{Y}$.
\end{enumerate}
\end{theorem}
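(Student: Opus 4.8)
The plan is to deduce both isomorphisms from the interpretation of first cohomology as isomorphism classes of torsors, carried out inside the category $\Ab_{\et}(G,X)$ of equivariant \'etale sheaves. Since this category has enough injectives and $H^p_{\et}(G;X,-)$ is by definition the $p$-th right derived functor of $\Gamma^G_X = (-)^G \circ \Gamma(X,-)$, it computes the sheaf cohomology of the topos of equivariant \'etale sheaves, whose global sections functor is exactly $\Gamma^G_X$. For any abelian $G$-sheaf $\mcal{A}$ the group $H^1_{\et}(G;X,\mcal{A})$ therefore classifies $\mcal{A}$-torsors in $\Ab_{\et}(G,X)$.

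For part (1), the structure sheaf $\mcal{O}_X$, and hence $\mcal{O}^*_X$, carries a canonical $G$-linearization coming from the action on $X$. I would check that an $\mcal{O}^*_X$-torsor in $\Ab_{\et}(G,X)$ is the same datum as a $G$-line bundle: passing from a $G$-line bundle $\mcal{L}$ to its sheaf of local frames $\mathrm{Isom}(\mcal{O}_X,\mcal{L})$ produces an $\mcal{O}^*_X$-torsor whose $G$-linearization is induced by that of $\mcal{L}$, and the associated-bundle construction provides an inverse. This is the equivariant refinement of the classical equivalence between $\mcal{O}^*_X$-torsors and invertible sheaves, and it respects tensor products, so it yields the natural isomorphism $\Pic^G(X) \xrightarrow{\iso} H^1_{\et}(G;X,\mcal{O}^*_X)$. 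As a cross-check, the five-term exact sequence of \eqref{eqn:ss} places $H^1_{\et}(G;X,\mcal{O}^*_X)$ into the same exact sequence
\[
0 \to H^1(G,\Gamma(X,\mcal{O}^*_X)) \to H^1_{\et}(G;X,\mcal{O}^*_X) \to \Pic(X)^G \to H^2(G,\Gamma(X,\mcal{O}^*_X))
\]
that classically governs $\Pic^G(X)$ via linearizations of the trivial bundle, the forgetful map to invariant bundles, and the obstruction to linearizing an invariant bundle.

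For part (2), I would use the short exact sequence of equivariant \'etale sheaves
\[
1 \to \G_{X,Y} \to \mcal{O}^*_X \to i_*\mcal{O}^*_Y \to 1,
\]
the right-hand surjectivity being the \'etale-local lifting of units along the closed immersion $i$, checked on strictly henselian stalks using that $i_*\mcal{O}^*_Y$ is supported on $Y$. The resulting long exact sequence in $H^*_{\et}(G;X,-)$, combined with the identification $H^p_{\et}(G;X,i_*\mcal{O}^*_Y) \cong H^p_{\et}(G;Y,\mcal{O}^*_Y)$ (valid because $i_*$ is exact and preserves injectives with $\Gamma^G_X \circ i_* = \Gamma^G_Y$) and with part (1), yields in low degrees
\[
\Gamma(X,\mcal{O}^*_X)^G \to \Gamma(Y,\mcal{O}^*_Y)^G \to H^1_{\et}(G;X,\G_{X,Y}) \to \Pic^G(X) \to \Pic^G(Y).
\]
The relative equivariant Picard group sits in the tautological exact sequence with the same four outer terms, obtained by forgetting and restricting trivializations. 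I would construct the comparison map directly through torsors, sending $(\mcal{L},\phi)$ to the $\G_{X,Y}$-torsor of frames of $\mcal{L}$ restricting to $\phi$ on $Y$, and conclude by the five lemma; equivalently, one verifies that this torsor assignment is already a bijection onto $H^1_{\et}(G;X,\G_{X,Y})$.

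The hardest parts will be the two verifications underlying the torsor interpretation. First, one must make precise that $\Ab_{\et}(G,X)$ is the abelian-group-object category of a topos, so that the derived-functor $H^1$ genuinely agrees with torsor classes, and that $\mcal{O}^*_X$-torsors correspond to $G$-line bundles compatibly with linearizations; this is the main conceptual point. Second, in part (2) one must establish exactness of the sequence above in the equivariant \'etale topology together with the identification $H^p_{\et}(G;X,i_*\mcal{F}) \cong H^p_{\et}(G;Y,\mcal{F})$, and then carry out the diagram chase compatibly; this is the main bookkeeping point. Both reduce to their well-known non-equivariant counterparts once the equivariant topos and the canonical linearization of $\mcal{O}^*_X$ are in place.
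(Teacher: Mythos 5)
First, a caveat about the comparison: the paper does not actually prove this theorem --- it is quoted as standard, with the proof deferred to \cite[Theorem 2.7, Lemma 6.7]{HVO} --- so your argument can only be measured against the standard proofs in the literature, not against anything internal to this paper. With that said, your outline is correct and complete in its main ideas. The identification of $\Ab_{\et}(G,X)$ with abelian sheaves on the \'etale topos of the quotient stack $[X/G]$ is exactly right: a $G$-linearization is precisely a descent datum along the finite \'etale cover $X \to [X/G]$, and $\Gamma^{G}_{X}$ is the global sections functor of that topos, so $H^{1}_{\et}(G;X,-)$ classifies torsors; likewise the auxiliary facts you flag for part (2) (stalkwise surjectivity of $\mcal{O}^{*}_{X} \to i_{*}\mcal{O}^{*}_{Y}$ using lifting of units along quotients of strictly Henselian local rings, exactness of $i_{*}$ together with $\Gamma^{G}_{X}\circ i_{*} = \Gamma^{G}_{Y}$, exactness of the tautological sequence for $\Pic^{G}(X,Y)$, and the five lemma) are all true and routine. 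The one verification that deserves more care than your sketch gives it is local triviality of the frame torsor $\mathrm{Isom}(\mcal{O}_{X},\mcal{L})$: on the site of $[X/G]$, sections over an object of the form $U \to X \to [X/G]$ are \emph{non-equivariant} frames over $U$, so ordinary Zariski-local trivializations of the underlying line bundle suffice. This point is essential, because $G$-invariant frames need not exist on any invariant neighborhood --- a nontrivial character of $G$ gives a $G$-line bundle on $\spec(k)$ with no invariant frame anywhere, representing the corresponding nonzero class under $H^{1}_{\et}(G;\spec(k),\mcal{O}^{*}) \iso H^{1}(G,k^{*})$ --- so if local triviality were tested equivariantly the dictionary between $G$-line bundles and torsors would fail. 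Compared with the more computational alternative of constructing the comparison map by hand and trapping it between the five-term exact sequence of \eqref{eqn:ss} and the classical sequence $0 \to H^{1}(G,\Gamma(X,\mcal{O}^{*}_{X})) \to \Pic^{G}(X) \to \Pic(X)^{G} \to H^{2}(G,\Gamma(X,\mcal{O}^{*}_{X}))$, your topos-theoretic route yields naturality and the relative statement (2) with essentially no cocycle bookkeeping.
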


We end this section by recording an equivariant version of Gabber's proper base change theorem for the cohomology of torsion \'etale $G$-sheaves, 
which will be needed to establish the equivariant version of Suslin's rigidity theorem in   \aref{sec:rigidity}. 

\begin{definition} 
\label{def:hensel} (\cite[Chapter XI, Definition 3]{Ray06})
Let $A$ be a commutative ring and $I\subseteq A$ an ideal which is contained in the Jacobson radical of $A$. 
The pair $(A,I)$  is said to be a \textit{Henselian pair} provided $\Hom_{A}(B,A)\to \Hom_{A}(B,A/I)$ is surjective for any \'etale $A$-algebra $B$.
 A $G$-action on a Henselian pair $(A, I)$ is simply a $G$-action on $A$ such that the ideal $I$ is invariant.
\end{definition}

\begin{theorem}[Equivariant Proper Base Change]
\label{thm:basechange}
Let $(A,I)$ be a Henselian pair with $G$-action. 
Let $f:Y\to \spec(A)$ be a proper equivariant map and define $Y_{0}$ by the pull-back
$$
\xymatrix{
Y_{0} \ar[r]^{i}\ar[d]_{f'} & Y \ar[d] ^{f} \\
\spec(A/I) \ar[r]^{j} & \spec(A).
}
$$
Let $\mcal{F}$ be a torsion \'etale $G$-sheaf on $Y$ and write $\mcal{F}_{0} = i^{*}\mcal{F}$.  
Then the restriction map induces an isomorphism $H^{n}_{\et}(G;Y,\mcal{F}) \iso H^{n}_{\et}(G;Y_{0}, \mcal{F}_{0})$ for each $n$. 
\end{theorem}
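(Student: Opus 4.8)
The plan is to reduce to the classical non-equivariant proper base change theorem for Henselian pairs via a comparison of the Grothendieck spectral sequences \eqref{eqn:ss} attached to $Y$ and to $Y_0$. First I would observe that the defining pull-back square is a diagram of $G$-schemes and equivariant maps: since $f$ is equivariant and the ideal $I$ is $G$-invariant, both $j:\spec(A/I)\to\spec(A)$ and the closed immersion $i:Y_0\hookrightarrow Y$ are $G$-equivariant. Hence $\mcal{F}_0=i^*\mcal{F}$ carries the $G$-linearization pulled back from that of $\mcal{F}$, and for every $q$ the restriction map
$$
i^*\colon H^q_{\et}(Y,\mcal{F})\longrightarrow H^q_{\et}(Y_0,\mcal{F}_0)
$$
is a homomorphism of $\Z[G]$-modules, where the $G$-actions are those induced by the linearizations as in \aref{sub:Gshv}.

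Next I would write down the spectral sequences \eqref{eqn:ss} for both schemes,
$$
E_2^{p,q}(Y)=H^p(G,H^q_{\et}(Y,\mcal{F}))\Rightarrow H^{p+q}_{\et}(G;Y,\mcal{F}),
$$
and the analogous one for $(Y_0,\mcal{F}_0)$. The restriction map is induced by the adjunction unit $\mcal{F}\to i_*i^*\mcal{F}$ and is compatible with the $G$-linearizations; by functoriality of the Grothendieck spectral sequence it therefore induces a morphism of spectral sequences $E_\bullet(Y)\to E_\bullet(Y_0)$ whose map on abutments is exactly the restriction in the statement. On the $E_2$-page this morphism is the map $H^p(G,H^q_{\et}(Y,\mcal{F}))\to H^p(G,H^q_{\et}(Y_0,\mcal{F}_0))$ obtained by applying group cohomology $H^p(G,-)$ to the $\Z[G]$-module map $i^*$.

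The essential input is the non-equivariant Gabber proper base change theorem for Henselian pairs applied to the underlying proper morphism $f$ and the torsion sheaf $\mcal{F}$, which asserts that $i^*\colon H^q_{\et}(Y,\mcal{F})\to H^q_{\et}(Y_0,\mcal{F}_0)$ is an isomorphism of abelian groups for all $q$. Together with the first step, this map is then an isomorphism of $\Z[G]$-modules, so $H^p(G,-)$ sends it to an isomorphism for every $p$; thus the comparison morphism is an isomorphism on every entry of the $E_2$-page. Because both spectral sequences are bounded and converge with finite filtrations on their abutments, the standard mapping lemma for spectral sequences promotes an $E_2$-isomorphism to an isomorphism of abutments, yielding $H^n_{\et}(G;Y,\mcal{F})\iso H^n_{\et}(G;Y_0,\mcal{F}_0)$ for each $n$.

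The step I expect to be the main obstacle is verifying cleanly that the non-equivariant restriction isomorphism is genuinely $G$-equivariant, i.e.\ that $i^*$ commutes with the isomorphisms $\phi_g\colon\mcal{F}\iso g_*\mcal{F}$ after pullback along $i$; this is a naturality check resting on the fact that $i$ intertwines the $G$-actions on $Y$ and $Y_0$, but it must be handled carefully since the entire argument depends on it. I emphasize that the torsion hypothesis on $\mcal{F}$ is used precisely to invoke the non-equivariant base change theorem, whereas no hypothesis on the order of $G$ relative to $\ch(k)$ enters here, as $H^p(G,-)$ preserves isomorphisms unconditionally.
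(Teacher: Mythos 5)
Your proposal is correct and follows essentially the same argument as the paper: equivariance of the restriction map, Gabber's non-equivariant proper base change theorem for Henselian pairs to get an isomorphism of $\Z[G]$-modules on \'etale cohomology, and then a comparison of the bounded convergent spectral sequences \eqref{eqn:ss} which is an isomorphism on the $E_2$-page and hence on abutments. The paper's proof is just a terser version of yours, leaving the equivariance naturality check and the spectral sequence mapping lemma implicit.
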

\begin{proof} 
Restriction induces a $G$-equivariant map $H^{p}_{\et}(Y,\mcal{F}) \to H^{p}_{\et}(Y_{0},\mcal{F}_{0})$.
Gabber's base change theorem \cite[Corollary 1]{Gab94} shows this is an isomorphism, and therefore it induces an isomorphism in group cohomology. 
Thus the induced comparison maps of spectral sequences (\ref{eqn:ss}) for $(Y, \mcal{F})$ and $(Y_0, \mcal{F}_0)$ is an isomorphism on the $E_{2}$-page. 
This implies the desired isomorphism.

\end{proof}

\section{Equivariant divisors and pseudo pretheories} 
\label{sec:EqDiv}
We begin by recalling the notion of equivariant Cartier divisors and their properties.
\subsection{Equivariant divisors}
Let $X$ be a $G$-scheme and $Y\subseteq X$ an invariant closed subscheme. 
\begin{definition} \label{def:eqDiv}
\begin{enumerate}
\item
An \textit{equivariant Cartier divisor} on $X$ is an element of 
$\Gamma^G_X(\mcal{K}_{X}^{*}/\mcal{O}_{X}^{*})$. 
The group of equivariant Cartier divisors on $X$ is denoted by $\Div^{G}(X)$.
An effective Cartier divisor $D$ on $X$ such that $D \in \Gamma^G_X(\mcal{K}_{X}^{*}/\mcal{O}_{X}^{*})$ is called an {\it equivariant effective Cartier divisor}.
\item
A \textit{relative equivariant Cartier divisor} on $X$ relative to $Y$ is an equivariant Cartier divisor $D$ on $X$ such that $\supp(D)\cap Y = \emptyset$. 
Write $\Div^{G}(X,Y)$ for the subgroup of $\Div^{G}(X)$ consisting of relative equivariant Cartier divisors.
\item
A \textit{principal equivariant Cartier divisor} is an invariant rational function on $X$, 
i.e., 
an element in the image of $\Gamma^G_X(\mcal{K}_{X}^{*})$ in $\Gamma^G_X(\mcal{K}_{X}^{*}/\mcal{O}_{X}^{*})$. 
In the relative setting, 
a \textit{principal equivariant Cartier divisor} $f$ on $X$ is said to be a \textit{principal relative equivariant Cartier divisor} if $f$ is defined and equal to $1$ at points of $Y$.
\item
Let $\Div^{G}_{rat}(X)$ denote the group of equivariant Cartier divisors on $X$ modulo the principal equivariant Cartier divisors, 
and likewise write $\Div^{G}_{rat}(X,Y)$ in the relative setting.
\end{enumerate}
\end{definition}

Given a Cartier divisor $D= \{(U_{i},f_{i})\}$ on $X$, we have an associated line bundle $\mcal{L}_D$ defined by $\mcal{L}_D|_{U_{i}} = \mcal{O}_{U_{i}}f_{i}^{-1}$.
When $D$ is an equivariant Cartier divisor it is easy to verify that the line bundle $\mcal{L}_D$ has a canonical $G$-linearization; 
write $\mcal{L}_D$ for the $G$-line bundle defined by this choice of linearization.
If $D$ is a relative equivariant Cartier divisor relative to $Y$ it is straightforward that $\mcal{L}_D|_Y$ is trivial.

Let $\mcal{Z}_{d}(X)$ (respectively $\mcal{Z}^{d}(X)$) denote the free group on dimension $d$ (respectively codimension $d$) cycles on $X$.
The homomorphism $cyc: Div(X) \to \mcal{Z}^{1}(X)$ is defined by $cyc(D) = \sum_{Z\in X^{1}}ord_{Z}(D)Z$, 
where $X^{1}$ is the set of closed integral codimension one subschemes. 
For a $G$-scheme $X$, 
the groups $\mcal{Z}_{d}(X)$ and $\mcal{Z}^{d}(X)$ have natural $G$-actions and $cyc$ is an equivariant homomorphism.
Therefore we conclude the following.

\begin{lemma} 
\label{lem:divagr}(\cite[Lemma~2.11]{HVO})
For a smooth $G$-scheme $X$,
$
cyc:\Div(X) \to \mcal{Z}^{1}(X)
$ 
is an equivariant isomorphism. 
\end{lemma}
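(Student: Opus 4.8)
The plan is to reduce the statement to the classical comparison theorem between Cartier divisors and codimension one cycles, and then to observe that the equivariance comes for free. The discussion preceding the lemma already records that $cyc$ is a $G$-equivariant homomorphism, so it suffices to prove that $cyc$ is bijective as a map of underlying abelian groups: the inverse of a bijective $G$-equivariant homomorphism is again $G$-equivariant, so bijectivity automatically upgrades $cyc$ to an equivariant isomorphism.

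To establish bijectivity I would first reduce to the integral case. As $X$ is smooth over $k$ it is regular, hence normal, and a connected normal scheme is integral; therefore $X$ is a finite disjoint union of smooth integral $k$-schemes $X_{1},\dots,X_{r}$, namely its connected components. Both $\Div(-)$ and $\mcal{Z}^{1}(-)$ carry such a disjoint union to the direct sum of the corresponding groups of the components, and $cyc$ respects this decomposition. Hence $cyc$ is an isomorphism on $X$ precisely when it is one on each $X_{i}$. The group $G$ may permute the $X_{i}$, but this is irrelevant at this stage, since bijectivity is tested after forgetting the $G$-action.

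It then remains to treat a smooth integral $k$-scheme $Y$, which is Noetherian and separated. For any integral, Noetherian, separated, locally factorial scheme the map $cyc\colon \Div(Y)\to \mcal{Z}^{1}(Y)$ is an isomorphism, by the standard comparison of Cartier and Weil divisors (see e.g.\ Hartshorne, Proposition II.6.11). Since $Y$ is smooth over $k$ it is regular, and by the Auslander--Buchsbaum theorem every regular local ring is a unique factorization domain; thus $Y$ is locally factorial and the cited result applies.

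I do not expect any genuine obstacle here: the entire content is classical, and the only points demanding care are the reduction to integral connected components and the passage from regularity to local factoriality via Auslander--Buchsbaum. The one new feature, namely $G$-equivariance, is obtained at no additional cost once the non-equivariant map is known to be bijective.
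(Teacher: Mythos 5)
Your proof is correct and matches the paper's approach: the paper (following \cite[Lemma~2.11]{HVO}) likewise notes that $cyc$ is a $G$-equivariant homomorphism and then invokes the classical Cartier--Weil comparison for smooth (hence regular, locally factorial) schemes, so that bijectivity of the underlying map yields the equivariant isomorphism. Your only addition is to spell out the reduction to integral components and the Auslander--Buchsbaum step, which the paper leaves implicit in its citation.
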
 

\subsection{Equivariant pseudo pretheories} 
\label{sub:pseudo}
An equivariant pseudo pretheory is defined as a presheaf on $\Sm^G_k$ with transfer maps associated to certain equivariant correspondences subject to some natural axioms.

\begin{definition} 
\label{def:pseudo}
An \textit{equivariant pseudo pretheory} on $\Sm^G_k$ is an additive presheaf $F:(\Sm^G_k)^{op}\to \Ab$ 
(i.e., $F(X\coprod Y) = F(X) \oplus F(Y)$) with transfer maps $\tr_{D}:F(X)\to F(S)$ for any equivariant relative smooth affine curve $X/S$ and effective equivariant Cartier divisor 
$D$ on $X$ which is finite and surjective over a component of $S$, such that the following holds.
\begin{enumerate}
\item 
The transfer maps are compatible with pullbacks.
\item 
If $D(i)$ is the divisor associated to an equivariant section $i:S\to X$, then 
$$
\tr_{D(i)} = F(i).
$$
\item 
Let $\mcal{L}_{D}$ be the $G$-line bundle associated to $D$. If the restriction of $\mcal{L}_{D}$ to $D'$ is trivial, then
$$
\tr_{D} + \tr_{D'} = \tr_{D + D'}.
$$
\end{enumerate}
\end{definition}

As usual we extend all functors defined on the category $\Sm^G_k$ to limits of smooth 
$G$-schemes with $G$-action (including semilocalizations of all smooth affine $G$-schemes at closed $G$-orbits) by taking direct limits. 
The above properties obviously remain true after such an extension as well.

\begin{definition}
A presheaf $F$ on $\Sm^G_k$ (or $\Sch^G_k$) is said to be \textit{homotopy invariant} if for any $X \in \Sm^G_k$ (respectively in $\Sch^G_k$) the projection map
$p_1:X\times\A^{1}\to X$ induces an isomorphism $p_1^{*}:F(X)\xrightarrow{\iso} F(X\times \A^{1})$,
where the $G$-action on $X\times \A^{1}$ is induced by the given $G$-action on $X$ and the trivial $G$-action on $\A^{1}$.
\end{definition}

\subsection{Examples of equivariant pseudo pretheories} 
\label{sub:examples}
In the following we discuss examples of equivariant pseudo pretheories such as equivariant algebraic $K$-theory, 
equivariant Suslin homology, 
$K_{0}^{G}$-presheaves with transfers, 
presheaves with equivariant transfers, 
and equivariant motivic representable theories. 

\begin{example}
\label{ex:presheavesequivarianttransfers} {\bf Presheaves with equivariant transfers.}
For smooth schemes $X$, $Y$,
the group of correspondences $\Cor_{k}(X,Y)\subseteq \mcal{Z}_{\dim(X)}(X\times Y)$ is the subgroup of $\mcal{Z}_{\dim(X)}(X\times Y)$ of cycles on $X\times Y$ 
which are finite over $X$ and surjective over some component of $X$. 
The category $\Cor_{k}$ has the same objects as $\Sm/k$ and $\Cor_{k}(X,Y)$ are the morphisms between $X$ and $Y$ in this category. 
The {\sl equivariant correspondences} $\Cor^{G}_{k}(X,Y)$ between smooth $G$-schemes are correspondences $Z:X\to Y$ such that the square
$$
\xymatrix{
G\times X  \ar[r]^{Z \times id}\ar[d]_{\mu} & G\times Y \ar[d]^{\mu}\\
X \ar[r]^{Z} & Y
}
$$
commutes in $\Cor_{k}$ \cite[Section~4]{HVO}. 
Unravelling definitions we have 
$$
\Cor^{G}_{k}(X,Y) = \Cor_{k}(X,Y)\cap \ZZ_{\dim X}(X\times Y)^{G}.
$$
Let $\Cor^{G}_{k}$ denote the category whose objects are smooth $G$-schemes and morphisms are equivariant correspondences.
There is a canonical inclusion $\Sm^G_k\subseteq \Cor^{G}_{k}$ which sends $f:X\to Y$ to its graph $\Gamma_{f}\subseteq X\times Y$.

\begin{definition} \label{def:psht}  \cite[Definition 4.1]{HVO}
A {\sl presheaf with equivariant transfers} is a presheaf of abelian groups on the category $\Cor^{G}_{k}$.
\end{definition}

Given an equivariant relative smooth affine curve $X/S$ and an 
effective equivariant Cartier divisor 
$D$ on $X$ which is finite and surjective over $S$, note that $D \in \Cor^{G}_{k}(S,X)$.
Moreover, if $D(i)$ is the divisor associated to an equivariant section $i: S \to X$,
then $D(i) = \Gamma_i$ in $\Cor^{G}_{k}(S,X)$.
Therefore if $F$ is a presheaf with equivariant transfers, then $F$ defines an additive presheaf
on $\Sm^G_k\subseteq \Cor^{G}_{k}$ such that for a divisor $D$ as above,
$\tr_D := F(D): F(X) \to F(S)$ satisfies conditions $(1)$, $(2)$ and $(3)$ of 
 \aref{def:pseudo}. 
\end{example}

\begin{example}
\label{ex:K^G} {\bf Equivariant $K$-theory.} 
The $G$-equivariant algebraic $K$-theory group $K_i^G(X)$ of a scheme $X$ with $G$-action is the $i$th homotopy group of the algebraic $K$-theory spectrum $K^G(X)$ of the exact category 
of $G$-vector bundles on $X$.
For $n\geq 2$, 
the equivariant $K$-groups with mod-$n$ coefficients are defined as $K_i^G(X;n):= \pi_i(K^G(X) \wedge \mathbb{S}/n)$, 
for the mod-$n$ Moore spectrum $\mathbb{S}/n$.

The equivariant algebraic $K$-theory groups $K_i^G$ define functors on $\Sch^G_k$ (and $\Sm^G_K$) by considering the category of ``big $G$-vector bundles" (\cite[Appendix C.4, C.5]{FS02}).
Let $p: X \to S$ be an equivariant relative smooth affine curve in $\Sm^G_k$ and let $i_D: D \hookrightarrow X$ be an effective equivariant Cartier divisor on $X$ such that $p_D:= p|_D: D \to S$
is finite and surjective. 
Then $p_D : D \to S$ is also flat. 
Let $\tr_D : K_i^G(X) \to K_i^G(S)$ denote the map induced by the functor $F_D: \Vect^G(X) \to \Vect^G(S)$ between the categories of $G$-vector bundles on $X$ and $S$ defined by 
$P\mapsto p_{D_*} \circ i_D^* (P)$.
By \cite[Theorem~4.1, Corollary~5.8(2)]{T87}, $K_i^G$ is a homotopy invariant functor on $\Sm^G_k$.
We show that $K_i^G$ is an equivariant pseudo pretheory on $\Sm^G_k$, 
so that $K_i^G(-;n)$ is a homotopy invariant equivariant pseudo-pretheory on $\Sm^G_k$ with $n$-torsion values.
\end{example}

\begin{lemma} 
\label{lem:K^G-pseudo-pre}
If $D$ and $D'$ are effective equivariant Cartier divisors on $X$ such that the restriction of the $G$-line bundle $\mcal{L}_{D}$ to the $G$-scheme $D'$ is a trivial $G$-line bundle,
then $\tr_{D + D'} = \tr_{D} + \tr_{D'}$.
\end{lemma}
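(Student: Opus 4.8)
The plan is to realize $\tr_{D+D'}$, $\tr_{D}$, and $\tr_{D'}$ as the maps induced on $K$-theory by three exact functors $\Vect^G(X)\to\Vect^G(S)$ that fit into a short exact sequence of functors, and then to invoke the additivity theorem in algebraic $K$-theory. Recall from \aref{ex:K^G} that $\tr_{D}$ is induced by $F_{D}(P)=p_{D_*}i_D^*(P)$. Since $i_D$ is a closed immersion and $p_D=p|_D$, this may be rewritten as
\[
F_{D}(P)=p_*\bigl(P\otimes_{\mcal{O}_X}\mcal{O}_D\bigr),
\]
where $\mcal{O}_D$ is regarded as a $G$-equivariant quotient of $\mcal{O}_X$. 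The functor $F_D$ is exact: tensoring a short exact sequence of $G$-vector bundles by $\mcal{O}_D$ remains exact because the terms are locally free (so the sequence is locally split), and $p_*$ is exact on sheaves supported on $D$ since $p_D$ is finite, hence affine.

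The key geometric input is a short exact sequence of $G$-sheaves on $X$. Writing $\mcal{L}_D^{-1}$ for the $G$-equivariant ideal sheaf of $D$, so that $0\to\mcal{L}_D^{-1}\to\mcal{O}_X\to\mcal{O}_D\to 0$, and using $\mcal{L}_{D+D'}^{-1}=\mcal{L}_D^{-1}\otimes\mcal{L}_{D'}^{-1}$, I would form
\[
0\to \mcal{L}_D^{-1}\big/\mcal{L}_{D+D'}^{-1}\to \mcal{O}_X\big/\mcal{L}_{D+D'}^{-1}\to \mcal{O}_X\big/\mcal{L}_D^{-1}\to 0,
\]
that is, after the natural identification $\mcal{L}_D^{-1}/\mcal{L}_{D+D'}^{-1}\iso \mcal{L}_D^{-1}|_{D'}$,
\[
0\to \mcal{L}_D^{-1}|_{D'}\to \mcal{O}_{D+D'}\to \mcal{O}_D\to 0,
\]
a sequence of $G$-equivariant sheaves, all of whose maps are equivariant. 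The hypothesis that $\mcal{L}_D|_{D'}$ is a trivial $G$-line bundle supplies a $G$-equivariant isomorphism $\mcal{L}_D^{-1}|_{D'}\iso\mcal{O}_{D'}$, turning this into
\[
0\to \mcal{O}_{D'}\to \mcal{O}_{D+D'}\to \mcal{O}_D\to 0.
\]

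Tensoring this sequence by an arbitrary $G$-vector bundle $P$ on $X$ (exact, as $P$ is locally free) and applying the exact functor $p_*$ yields a short exact sequence in $\Vect^G(S)$,
\[
0\to F_{D'}(P)\to F_{D+D'}(P)\to F_{D}(P)\to 0,
\]
natural in $P$; here one uses that $D$, $D'$, and $D+D'$ are finite and flat over $S$, so that the three terms are genuinely $G$-vector bundles on $S$. Thus $0\to F_{D'}\to F_{D+D'}\to F_{D}\to 0$ is a short exact sequence of exact functors, and the additivity theorem in equivariant algebraic $K$-theory gives the identity $(F_{D+D'})_*=(F_{D'})_*+(F_{D})_*$ on $K^G$, hence $\tr_{D+D'}=\tr_{D}+\tr_{D'}$ on every $K_i^G$.

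The main obstacle I anticipate is bookkeeping the $G$-equivariance throughout: one must verify that the ideal-sheaf sequence, the identification $\mcal{L}_D^{-1}/\mcal{L}_{D+D'}^{-1}\iso\mcal{L}_D^{-1}|_{D'}$, and the trivialization furnished by the hypothesis are all compatible with the $G$-linearizations, so that the resulting sequence of functors genuinely lands in $\Vect^G(S)$ and the additivity theorem applies equivariantly. The remaining points — exactness of $F_D$ and of the tensor and pushforward operations — are routine consequences of local freeness of $P$ and finiteness of $p_D$.
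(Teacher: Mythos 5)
Your proposal is correct and follows essentially the same route as the paper: your short exact sequence $0\to\mcal{O}_{D'}\to\mcal{O}_{D+D'}\to\mcal{O}_D\to 0$ (obtained from the ideal-sheaf filtration plus the equivariant trivialization of $\mcal{L}_D|_{D'}$) is exactly the paper's sequence \eqref{eqn:EPP1}, where the first map is written as multiplication by the trivializing section $f\in\Gamma^G_{D'}(\mcal{L}_D|_{D'})$. After that, both arguments tensor with $P$, push forward along the finite flat map to $S$, and conclude by Waldhausen's additivity theorem.
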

\begin{proof}
We write $i: D \hookrightarrow D + D'$ and $i': D' \hookrightarrow D + D'$ for the corresponding 
$G$-equivariant closed immersions.
Let $f \in \Gamma^G_{D'}(\mcal{L}_D|_{D'})$ define the trivialization of  $\mcal{L}_D$ on $D'$.
Since $\mcal{L}_D$ defines the ideal sheaf of $D$,
we have an exact sequence of $G$-equivariant coherent sheaves on $D + D'$:
\begin{equation} \label{eqn:EPP1}
0 \rightarrow i'_*(\mcal{O}_{D'}) \xrightarrow{f}  \mcal{O}_{D+D'} \rightarrow i_*(\mcal{O}_D) 
\rightarrow 0,
\end{equation}
where the maps are $G$-equivariant.
Given $P \in \Vect^G(X)$, 
the above exact sequence gives the following exact sequence:
$$
0 \rightarrow i'_*\circ i_{D'}^*(P) \rightarrow  
i_{D+D'}^*(P) \rightarrow 
i_* \circ i_D^*(P) \rightarrow 0.
$$
Pushforward by the equivariant, finite, and flat map $p_{D+D'}$ gives an exact sequence of 
$G$-vector bundles on $S$:
$$
0 \rightarrow p_{D'_*} \circ i_{D'}^*(P) \rightarrow  
p_{D+D'_*} \circ i_{D+D'}^*(P) \rightarrow 
p_{D_*} \circ i_D^* (P) \rightarrow 0,
$$
which by definition of the transfer maps is the exact sequence of functors:
$$
0 \rightarrow\tr_{D'}(P) \rightarrow  
\tr_{D+D'}(P)\rightarrow 
\tr_{D}(P)\rightarrow 0.
$$
Therefore by Waldhausen's additivity theorem, 
\cite[Proposition 1.3.2(4)]{W85}, 
we conclude that $\tr_{D + D'} = \tr_{D} + \tr_{D'}$.
\end{proof}

\begin{example}
\label{sub:SusHom}
{\bf Equivariant Suslin Homology.} 
For $n \in \mathbb{N}$, the algebraic $n$-simplex $\Delta^n$ is 
$$
\Delta^n 
:= 
\spec\left(\frac{k[t_0, \cdots , t_n]}{(\sum_i t_i - 1)}\right)
$$
and $\Delta^{\bullet} = \{\Delta^n\}_{n \ge 0}$ is a cosimplicial scheme with face and degeneracy maps given by:
\[
\partial_r(t_j) = \left\{ \begin{array}{ll}
t_j  & \mbox{if $j < r$} \\
0  & \mbox{if $j = r$} \\
t_{j-1} & \mbox{if $j > r$}
\end{array} \right. \quad
\delta_r(t_j) = \left\{ \begin{array}{ll}
t_j  & \mbox{if $j < r$} \\
t_j + t_{j+1}  & \mbox{if $j = r$} \\
t_{j+1} & \mbox{if $j > r$.}
\end{array} \right.
\]
We view $\Delta^{\bullet}$ as a cosimplicial $G$-scheme with trivial $G$-action.

For a smooth morphism $f:X \to S$, let $C_0(X/S) \subseteq \Cor_{k}(S,X)$ denote the group of cycles on $X$ which are finite and surjective over a component of $S$.
If $X, S \in \Sch^G_k$ and $f$ is $G$-equivariant, then $C_0(X/S)$ is a $G$-invariant subset of $\Cor_{k}(S,X)$.
We let $C_{\bullet}(X/S)^{G}$ denote the chain complex associated to the simplicial abelian group $n\mapsto C_{n}(X/S)^{G}$, 
where $C_{n}(X/S) := C_{0}(X\times\Delta^{n}/S\times\Delta^{n})$.

\begin{definition} 
\label{def:SusHom}
The $n$th \textit{equivariant Suslin homology} of $X/S$ is defined as the $n$th homology group of the complex of abelian groups $C_{\bullet}(X/S)^{G}$:
$$
\HH_{n}^{\Sus}(G;X/S) := H_{n}C_{\bullet}(X/S)^{G}.
$$
\end{definition}

For a smooth $G$-scheme $X$ over $k$,
let $\Z_{tr,G}(X)$ denote the presheaf with equivariant transfers given by the representable 
functors $\Z_{tr,G}(X)(U) := C_{0}(X \times U/U)^G = \Cor_k^G(U,X)$ 
for each $U \in \Sm^G_k$.
When $G$ is trivial, this is the same as the presheaf $c_{equi}(X/\spec(k), 0)$ studied in 
\cite[Section 5.3]{Voev00-1}. Similarly for each $n$, the presheaf 
$U \mapsto \HH_{n}^{\Sus}(G;X \times U/U)$ is a homotopy invariant presheaf with equivariant 
transfers.
Therefore this defines a family of homotopy invariant equivariant pseudo pretheories.

\begin{lemma} 
\label{lem:sustr}
Let $F$ be a homotopy invariant equivariant pseudo pretheory on $\Sm^G_k$.
Let $S$ be an equivariantly irreducible smooth semilocal $G$-scheme and
$X/S$ be a relative smooth affine curve.
Let $D$ and $D'$ be effective equivariant Cartier divisors on $X$ which are finite and 
surjective over $S$. 
If the image of $(D-D')$ in $\HH_{0}^{\Sus}(G;X/S)$ vanishes, 
then $\tr_{D} = \tr_{D'}$.
Here $\tr_{D}$ and $\tr_{D'}$ denote the transfer maps associated to $D$ and $D'$, 
respectively.
\end{lemma}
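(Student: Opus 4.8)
The plan is to read the hypothesis as a homotopy in the relative cycle complex, transport it to a relative effective divisor on $X\times\A^1$, and then combine condition (1) of \aref{def:pseudo} with homotopy invariance. Unwinding \aref{def:SusHom}, the vanishing of the class of $D-D'$ in $\HH_0^{\Sus}(G;X/S)=H_0C_\bullet(X/S)^G$ means there is an equivariant relative cycle $Z\in C_1(X/S)^G=C_0(X\times\Delta^1/S\times\Delta^1)^G$ with $Z|_0-Z|_1=D-D'$, where $Z|_j=\partial_j^*Z$ is the restriction along the vertices of $\Delta^1\cong\A^1$ and the boundary is $\partial_0^*-\partial_1^*$. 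By \aref{lem:divagr} applied to the smooth $G$-scheme $X\times\A^1$, the cycle $Z$ is an equivariant Cartier divisor, and splitting it into positive and negative parts produces equivariant effective Cartier divisors $\mathcal{E},\mathcal{F}$ with $Z=\mathcal{E}-\mathcal{F}$: since $Z$ is $G$-invariant, $G$ cannot interchange components of opposite multiplicity, so it preserves $\mathcal{E}$ and $\mathcal{F}$ separately, while equivariant irreducibility of $S$ promotes ``finite and surjective over a component'' to the same over all of $S\times\A^1$. Restricting to the two vertices (where the restrictions are again effective Cartier divisors finite over $S$, as $\mathcal{E},\mathcal{F}$ dominate the $\A^1$-direction), the relation $Z|_0-Z|_1=D-D'$ rearranges into the equality of effective divisors
\[ \mathcal{E}|_0+\mathcal{F}|_1+D' = \mathcal{E}|_1+\mathcal{F}|_0+D. \]

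The homotopy step, which I expect to be clean, shows that for any effective equivariant Cartier divisor $\mathcal{D}$ on $X\times\A^1$ finite and surjective over $S\times\A^1$ the two endpoint transfers agree, $\tr_{\mathcal{D}|_0}=\tr_{\mathcal{D}|_1}$. Base change along the vertex section $i_j\colon S\to S\times\A^1$ realizes $X$ as the fiber product $(X\times\A^1)\times_{S\times\A^1}S$ and $\mathcal{D}|_j$ as the pulled-back divisor, so condition (1) of \aref{def:pseudo} gives $i_j^*\circ\tr_{\mathcal{D}}=\tr_{\mathcal{D}|_j}\circ(i_j^X)^*$, where $i_j^X\colon X\hookrightarrow X\times\A^1$ is the vertex inclusion. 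Precomposing with $\mathrm{pr}_X^*\colon F(X)\to F(X\times\A^1)$, a section of $(i_j^X)^*$, yields $\tr_{\mathcal{D}|_j}=i_j^*\circ\tr_{\mathcal{D}}\circ\mathrm{pr}_X^*$ for $j=0,1$. Both vertex sections split the projection $p_1\colon S\times\A^1\to S$, so homotopy invariance forces $i_0^*=i_1^*=(p_1^*)^{-1}$ on $F(S\times\A^1)$, whence $\tr_{\mathcal{D}|_0}=\tr_{\mathcal{D}|_1}$. Applying this to $\mathcal{E}$ and to $\mathcal{F}$ gives $\tr_{\mathcal{E}|_0}=\tr_{\mathcal{E}|_1}$ and $\tr_{\mathcal{F}|_0}=\tr_{\mathcal{F}|_1}$.

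Applying transfers to the displayed equality of effective divisors and splitting both sides as sums would cancel the matching $\mathcal{E}$- and $\mathcal{F}$-terms against the homotopy step and leave exactly $\tr_{D'}=\tr_D$. The step that is not formal, and which I expect to be the main obstacle, is the additivity $\tr_{A+B}=\tr_A+\tr_B$ needed to split the sums: condition (3) of \aref{def:pseudo} grants this only when the restricted $G$-line bundle $\mcal{L}_A|_B$ is trivial. When $\supp(A)\cap\supp(B)=\emptyset$ this holds canonically, since the local equations of $A$ are units along $B$; but in general it can fail. Over the semilocal base $S$ the underlying line bundle on the (semilocal) support of $B$ is trivial, yet by \aref{thm:H90} and the spectral sequence~\eqref{eqn:ss} the equivariant Picard group of a semilocal $G$-scheme is governed by $H^1(G,\mcal{O}^*)$, which need not vanish, so equivariant triviality is a genuine issue.

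To finish I would upgrade condition (3) to unconditional additivity over the semilocal base by a moving argument. Given effective equivariant divisors $A,B$ on $X$ finite over $S$, I would deform $A$ equivariantly inside $X\times\A^1$ to an effective divisor $A'$ whose support is disjoint from that of $B$; the homotopy step then gives $\tr_A=\tr_{A'}$ and, applied to $A+B$ deformed through $A'+B$ (moving $A$ while keeping the constant divisor on $B$), $\tr_{A+B}=\tr_{A'+B}$, while the disjoint-support case of condition (3) gives $\tr_{A'+B}=\tr_{A'}+\tr_B$. Combining these yields $\tr_{A+B}=\tr_A+\tr_B$, which is precisely what the final cancellation requires. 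Producing the equivariant deformation of $A$ off $\supp(B)$ while retaining finiteness and surjectivity over the semilocal $S$ is the technical heart of the matter.
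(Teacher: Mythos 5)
Your decomposition of the homotopy into invariant effective parts and your homotopy step are both correct --- the latter (condition (1) of \aref{def:pseudo} at the two vertex sections, combined with homotopy invariance) is exactly the engine that the real proof runs on. You have also put your finger on the right difficulty: condition (3) only gives additivity when the restricted $G$-line bundle is \emph{equivariantly} trivial, and this genuinely fails over semilocal bases. The fatal problem is your proposed repair. Unconditional additivity $\tr_{A+B}=\tr_A+\tr_B$ over a semilocal base is not merely hard to obtain by a moving argument --- it is false, so no such argument can exist. Take $G=\Z/2\Z$ acting on $X=\A^1_k=\spec(k[x])$ over $S=\spec(k)$ by $x\mapsto -x$, let $F=K_0^G$, and let $A=B=D$ be the divisor of zeros of $x$. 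Then $\tr_{2D}(\mcal{O}_X)=[k[x]/(x^2)]=1+\sigma$ in $K_0^G(\spec(k))$, where $\sigma$ is the sign character (acting on the class of $x$), whereas $2\,\tr_{D}(\mcal{O}_X)=2$; these classes are distinct in the representation ring since $\ch(k)\neq 2$. Correspondingly, the equivariant deformation you need does not exist: an equivariant effective divisor on $X\times\A^1$, finite and surjective of degree one over $S\times\A^1$ and equal to $D$ at $t=0$, is an invariant section and hence sits at the unique fixed point $0$ for all $t$, so $D$ cannot be moved off itself. Since your final cancellation requires splitting transfers over sums whose summands may share support (indeed may be equal), the argument cannot be completed along these lines.

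The missing idea, which is how the proof invoked by the paper ([HVO, Lemma~6.3], following Suslin--Voevodsky) works, is to exploit the rational equivalence through a \emph{single} effective divisor rather than by splitting cycles. In the situation where the lemma is applied, $X/S$ has an equivariant good compactification $\overline{X}$, and by \eqref{eqn:SusHom} and \aref{lem:Sus-Hom}(2) the vanishing of $D-D'$ in $\HH_{0}^{\Sus}(G;X/S)\iso\Div^{G}_{rat}(\overline{X},X_{\infty})$ means $D-D'=\mathrm{div}(f)$ for a $G$-invariant rational function $f$ on $\overline{X}$ which is defined and equal to $1$ at $X_{\infty}$. One then forms the pencil
\[
\mcal{D}\;:=\;\mathrm{div}_{\overline{X}\times\A^1}\bigl(tf+(1-t)\bigr)\;+\;D'\times\A^1 ,
\]
which is an effective equivariant Cartier divisor supported in $X\times\A^1$ (the function $tf+(1-t)$ is a unit with value $1$ near $X_{\infty}\times\A^1$), finite and surjective over $S\times\A^1$, with $\mcal{D}|_{t=0}=D'$ and $\mcal{D}|_{t=1}=D$. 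Your own homotopy step applied to this one divisor gives $\tr_{D}=\tr_{D'}$, and no additivity of transfers is ever used. This also explains why the lemma coexists with the failure of additivity: in the example above, $2D$ is rationally equivalent to $D'=\mathrm{div}(x^2-1)$ via the invariant function $x^2/(x^2-1)$, the connecting pencil is the zero divisor of $x^2-(1-t)$, and indeed $\tr_{2D}=\tr_{D'}=1+\sigma$, which differs from $2\,\tr_{D}=2$.
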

\begin{proof}
The proof follows as in \cite[Lemma 6.3]{HVO}.
\end{proof}
\end{example}

\begin{example}
\label{ex:K_0^G} {\bf $K_0^G$-presheaves.}
The notion of $K_0$-presheaves was introduced and studied by Walker in
\cite{Wal96} (see also \cite[Section~1]{Sus03}). Homotopy invariant $K_0$-presheaves satisfy many
properties enjoyed by presheaves with transfers. An equivariant generalisation of this
notion was developed in \cite[Section~6.2]{HKO15}. We briefly recall the definition here.

For $X, Y \in \Sch^G_k$, let $\mcal{P}^G(X,Y)$ denote the category of coherent $G$-modules 
on $X \times Y$ which are flat over $X$ and whose support is finite over $X$.
This is an exact subcategory of the abelian category of coherent $G$-modules on
$X \times Y$. Define $K_0^G(X, Y) := K_0(\mcal{P}^G(X,Y))$. 
Given $X, Y, Z \in \Sm^G_k$, we have a natural biexact bifunctor
$\mcal{P}^G(X, Y) \times \mcal{P}^G(Y, Z) \to \mcal{P}^G(X,Z)$ given by 
$(P, Q) \mapsto 
(p_{XZ})_*(p_{XY}^*(P) \otimes p_{YZ}^*(Q))$,
where the tensor product is taken over $\mcal{O}_{X \times Y \times Z}$.
Thus we get a natural composition pairing of exact categories
$\circ: K_0^G(X,Y) \times K_0^G(Y,Z) \to K_0^G(X,Z)$ and
all these composition laws are associative. This allows us to define an
additive category $K_0(\Sm^G_k)$ by taking the objects of $\Sm^G_k$ to be the objects
and defining $\Hom_{K_0(\Sm^G_k)}(X,Y)= K_0^G(X,Y)$. 
A {\sl $K_0^G$-presheaf} is an additive presheaf of abelian groups 
on the category $K_0(\Sm^G_k)$.
Equivariant algebraic $K$-theory $K^G_i(-)$ is a $K_0^G$-presheaf for all $i$;
therefore, \aref{ex:K^G} is a special case of this one.

There is a functor $\Sm^G_k \to K_0(\Sm^G_k)$ which is the identity on objects and sends a 
morphism $q: X \to Y$ to the structure sheaf $\mcal{O}_{\Gamma_q}$ of the graph 
$\Gamma_q \subseteq X \times Y$.
In particular, a $K_0^G$-presheaf is also a presheaf on $\Sm^G_k$ and
we discuss below that it is in fact an equivariant pseudo pretheory.

Given an equivariant relative smooth affine curve $p: X \to S$ and an effective equivariant
Cartier divisor $i_D: D \hookrightarrow X$ which is finite and surjective over $S$,
the map $p_D:= p|_D: D \to S$ is a finite and flat equivariant map.
Let $\Gamma_{p_D}^t \subseteq  S \times D$ denote the transpose of the graph
of $p_D$ and let $\mcal{O}_{\Gamma_{p_D}^t}$ denote its structure sheaf. 
Then $\mcal{F}^t_D := (Id_S \times i_D)_* (\mcal{O}_{\Gamma_{p_D}^t}) 
\in \mcal{P}^G(S, X)$.
Define $\tr_D: F(X) \to F(S)$ to be 
$F(\mcal{F}^t_D)$.
Then the transfer maps $\tr_D$ are clearly compatible with pullbacks and sections.
If $D$ and $D'$ are as in \aref{lem:K^G-pseudo-pre}, then the
exact sequence \eqref{eqn:EPP1} gives an
exact sequence of coherent sheaves in $\mcal{P}^G(S,X)$:
$$
0 \to \mcal{F}^t_{D'}  \to \mcal{F}^t_{D+D'} \to \mcal{F}^t_{D}  \to 0.
$$
Using the additivity in $K_0^G(S,X)$,
it follows that $\tr_{D + D'} = \tr_{D} + \tr_{D'}$.
\end{example}

\begin{example} {\bf Bredon motivic cohomology.}
\label{ex:BredonMotivicCohomology}
Bredon motivic cohomology introduced in \cite[Section~5]{HVO} and further studied in \cite{HVO16}
(for smooth varieties equipped with $\Z/2\Z$-action) is an equivariant generalization of 
motivic cohomology for finite group actions.

For a smooth $G$-scheme $X$ over $k$,
recall that $\Z_{tr,G}(X)$ denotes the presheaf with equivariant transfers given by
$\Z_{tr,G}(X)(-) := \Cor_k^G(-,X)$. 
If $F$ is a presheaf of abelian groups on $\Sm^G_k$,
write $C^*F(X)$ for the cochain complex associated to the 
simplicial abelian group $F(X \times \Delta_{\bullet})$.
For a finite dimensional representation $V$ of
$G$, let $\Z_G(V)$ denote the complex of presheaves with equivariant transfers
given by: 
$$
\Z_G(V) := C^*(\Z_{tr,G}(\P(V \oplus 1))/\Z_{tr,G}(\P(V)))[-2 \dim(V)].
$$
The {\sl Bredon motivic cohomology} of a smooth $G$-variety $X$
is defined to be the equivariant Nisnevich hypercohomology with coefficients in $\Z_G(V)$: 
$$H^n_G(X,\Z(V)) := H^n_{GNis}(X,\Z_G(V)).$$ (See  \aref{sub:EqNis}
for the definition of the equivariant Nisnevich site.)

The fact that Bredon motivic cohomology define presheaves with
equivariant transfers follows from \cite[ Proposition 3.1.9]{Voev00} in the case of a trivial group
and is proved in \cite[Corollary 3.8]{HVO16} for $\Z/2\Z$.
The case of finite groups follows verbatim from the fact that 
smooth $G$-schemes have finite equivariant Nisnevich cohomological
dimension \cite[Corollary 3.9]{HVO} and \cite[Theorem 4.15(3)]{HVO}.
Therefore
Bredon motivic cohomology define equivariant pseudo pretheories.
\end{example}

\section{Equivariant Nisnevich topology and compactifications} 
\label{sec:GComp}
In this section we discuss the notions of equivariant Nisnevich topology and equivariant good compactification of equivariant smooth relative curves.
We establish some of their properties which are needed in the proofs of our rigidity theorems.

\subsection{Equivariant Nisnevich topology} We recall briefly the equivariant Nisnevich topology on $\Sm^G_k$ for finite groups, first introduced by Voevodsky in   \cite[Section~3.1]{Del09}.

\label{sub:EqNis}

\begin{definition} \label{def:Nis}
A {\sl distinguished square} in $\Sch^G_k$ is a cartesian square
\begin{equation} \label{eqn:N-1}
\xymatrix{
B \ar[d] \ar[r]
& Y \ar[d]^{p} \\
A \ar@{^{(}->}[r]^{j}
& X,  \\
}
\end{equation}
where $j$ is an equivariant open immersion, $p$  an equivariant \'etale morphism, and the induced map
$(Y \smallsetminus B)_{\red} \to (X \smallsetminus A)_{\red}$ is an 
isomorphism. The collection of distinguished squares 
forms a $cd$-structure in the sense of \cite[Definition~2.1]{Voev10}.
The associated Grothendieck topology is called the equivariant 
{\sl Nisnevich} topology. We write $(\Sm^G_k)_{GNis}$ (resp. $(\Sch^G_k)_{GNis}$) for the 
respective sites of smooth $G$-schemes and $G$-schemes equipped with the 
{\sl equivariant Nisnevich topology}.
\end{definition}

Equivariant Nisnevich covers admit the following equivalent characterizations (see \cite[Propositions 2.15, 2.17]{HKO15}).

\begin{proposition}
\label{prop:NisChar}
Let $f:Y\to X$ be an equivariant \'etale map between $G$-schemes. 
The following are equivalent.
\begin{enumerate}
\item 
The map $f$ is an equivariant Nisnevich cover.
\item 
There exists a sequence of invariant closed subschemes
$$
\emptyset = Z_{m+1}\subseteq Z_{m}\subseteq \cdots \subseteq Z_{1}\subseteq Z_{0}=X
$$
such that $f|_{f^{-1}(Z_{i}-Z_{i+1})}:f^{-1}(Z_{i}-Z_{i+1})\to Z_{i}-Z_{i+1}$ has an equivariant section.
\item 
For every $x\in X$, 
there exists a point $y\in Y$ such that $f$ induces isomorphisms of residue fields $k(x) \iso k(y)$ and set-theoretic stabilizers $G_{y} \iso G_{x}$.
\end{enumerate}
\end{proposition}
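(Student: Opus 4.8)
The plan is to prove the implications cyclically, $(1)\Rightarrow(3)\Rightarrow(2)\Rightarrow(1)$. Since neither the equivariant Nisnevich topology nor the pointwise condition in (3) is affected by passing to reduced subschemes, and since the filtration in (2) may be taken with reduced terms, I would first reduce to the case that $X$ is reduced. For $(1)\Rightarrow(3)$ I would use that the equivariant Nisnevich topology is the topology of the distinguished-square $cd$-structure, so that a covering sieve generated by $f$ contains a covering sieve built from a finite composite of distinguished squares; this produces a family $\{g_\alpha\colon V_\alpha\to X\}$ with each $g_\alpha=f\circ h_\alpha$. The key point is that the pointwise lifting property (3) holds for the two-term cover attached to a single distinguished square --- for $x$ outside the open part this is precisely the $G$-equivariant isomorphism $(Y'\setm B)_{\red}\iso(X\setm A)_{\red}$, which matches residue fields and stabilizers, and for $x$ inside it the open immersion supplies the lift --- and that (3) is stable under composition. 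Given $x\in X$, I would thus obtain $v_\alpha\in V_\alpha$ over $x$ with $k(x)\iso k(v_\alpha)$ and $G_x=G_{v_\alpha}$; setting $y=h_\alpha(v_\alpha)$, the field extensions $k(x)\hookrightarrow k(y)\hookrightarrow k(v_\alpha)$ and the stabilizer inclusions $G_{v_\alpha}\subseteq G_y\subseteq G_x$, whose composites are respectively an isomorphism and an equality, force $k(x)\iso k(y)$ and $G_y=G_x$.

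The implication $(3)\Rightarrow(2)$ is the heart of the matter, and I would extract the equivariant local section lemma: if $f$ satisfies (3), then there is a dense invariant open $U\subseteq X$ over which $f$ admits a $G$-equivariant section. Granting this, I take $Z_1=(X\setm U)_{\red}$, a proper invariant closed subscheme containing no generic point of $X$, note that $f^{-1}(Z_1)\to Z_1$ again satisfies (3), and finish by Noetherian induction on $\dim X$. To prove the lemma, let $\eta$ be a generic point with stabilizer $H=G_\eta$ and, using (3), choose $y$ over $\eta$ with $k(y)\iso k(\eta)$ and $G_y=H$. As $f$ is \'etale with trivial residue extension at $y$, it restricts to an open immersion on a neighborhood of $y$, and intersecting the $H$-translates of that neighborhood (legitimate since $H$ fixes $y$) yields an $H$-equivariant section $\sigma$ over an $H$-invariant open $N\ni\eta$. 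The essential step is to induce $\sigma$ to a $G$-equivariant section: shrinking $N$ into the complement of the closures of the other components, I may assume the translates $\{gN\}_{gH\in G/H}$ are pairwise disjoint, whence $G\times^{H}N\to X$ is an open immersion onto the invariant open $W=\bigsqcup_{gH}gN$ and $G\times^{H}\sigma$ is a $G$-equivariant section over $W$. Doing this for one $\eta$ in each $G$-orbit of generic points and taking $U$ to be the union of the resulting (disjoint) opens yields the lemma.

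For $(2)\Rightarrow(1)$ I would induct on the length $m$ of the splitting sequence. When $m=0$ the map $f$ has a global equivariant section, so it generates the maximal sieve and is a cover. For the inductive step the equivariant section over $U_0=X\setm Z_1$, together with the inductive hypothesis that $f^{-1}(Z_1)\to Z_1$ is a cover, combine through the elementary distinguished square determined by $U_0$ to show that the sieve generated by $f$ is covering; this uses only that the equivariant Nisnevich topology is the $cd$-topology of the complete, regular, bounded distinguished-square $cd$-structure and that covers are stable under composition and base change, and is routine once that formalism is available.

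The step I expect to be the genuine obstacle is the equivariant local section lemma in $(3)\Rightarrow(2)$: promoting a naive local section near $y$ first to an $H$-equivariant section and then inducing it up to a $G$-equivariant one. It is exactly the stabilizer equality $G_y=G_x$ furnished by (3) that makes the induction functor $G\times^{H}(-)$ applicable and thereby bridges the passage from the local, subgroup-equivariant data to a genuinely $G$-equivariant section.
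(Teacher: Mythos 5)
The paper does not prove this proposition itself; it cites \cite[Propositions~2.15, 2.17]{HKO15}, so I am measuring your argument against the standard one. Your $(1)\Rightarrow(3)$ is essentially the standard argument and is fine (completeness of the $cd$-structure gives a simple covering inside the sieve generated by $f$, the pointwise property holds for a single distinguished square and is stable under refinement, and the sandwiching of residue fields and stabilizers finishes it). Your strategy for $(3)\Rightarrow(2)$ --- an equivariant section over a dense invariant open, obtained at generic points and induced up via $G\times^{H}(-)$, followed by Noetherian induction --- is also the right one, but the justification of the key local step is wrong as stated: an \'etale map with trivial residue extension \emph{at a point} need not be an open immersion near that point. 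Take $\G_m\to\A^1$, $x\mapsto x^2$, in characteristic $\neq 2$, at the point $x=1$: this is \'etale with residue field isomorphism, yet no open neighborhood maps by an open immersion, since every nonempty open dominates $\A^1$ with a degree-two function field extension. Your step is saved only because your $y$ lies over a \emph{generic} point $\eta$: then $\mcal{O}_{Y,y}$ is \'etale over the field $\mcal{O}_{X,\eta}$, so $y$ is a generic point of $Y$ with $k(y)\iso k(\eta)$, and after discarding the other components of $Y$ one has an \'etale birational map of integral schemes, which is an open immersion; alternatively, and more simply, the $k(\eta)$-point $y$ of the generic fiber spreads out to a section over a dense open by a finite-presentation limit argument. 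You need to say this; the general principle you invoke is false.

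The genuine gap is in $(2)\Rightarrow(1)$: your induction peels off the wrong stratum, and the square you appeal to does not exist. You want to combine the section over the open stratum $U_0=X\setminus Z_1$ with the inductive statement that $f^{-1}(Z_1)\to Z_1$ is a cover ``through the elementary distinguished square determined by $U_0$.'' But the square with corners $f^{-1}(U_0)$, $Y$, $U_0$, $X$ is \emph{not} distinguished: a distinguished square requires the \'etale leg to restrict to an isomorphism over $(X\setminus U_0)_{\red}=(Z_1)_{\red}$, whereas $f$ restricted over $Z_1$ is merely a cover. Nor does the formalism let you promote a covering of the closed subscheme $Z_1$ (built from distinguished squares of $Z_1$-schemes, which are not \'etale over $X$) to part of a covering of $X$. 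The correct induction goes the other way: use the section $s_m$ over the \emph{closed} stratum $Z_m=Z_m\setminus Z_{m+1}$. Since $s_m$ is a section of a separated \'etale map, $s_m(Z_m)$ is open and closed in $f^{-1}(Z_m)$, so $V:=Y\setminus\bigl(f^{-1}(Z_m)\setminus s_m(Z_m)\bigr)$ is an invariant open subscheme of $Y$, and $\{X\setminus Z_m\hookrightarrow X,\ V\to X\}$ is a genuine distinguished square, because $V\to X$ is \'etale and restricts over the closed complement to the isomorphism $s_m(Z_m)_{\red}\iso (Z_m)_{\red}$. One then applies the inductive hypothesis to $f^{-1}(X\setminus Z_m)\to X\setminus Z_m$, whose splitting sequence $\{Z_i\setminus Z_m\}$ has length $m-1$, and concludes by the local character of the topology (the pullback of the $f$-sieve to $V$ is maximal, and to $X\setminus Z_m$ is covering). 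So the step you dismiss as routine --- manufacturing the distinguished square --- is exactly the point requiring the shrinking trick, and your version of the induction cannot be repaired without exchanging the roles of the open and closed strata.
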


Let $X \in \Sch^G_k$ and suppose $x \in X$ has an invariant open affine neighborhood. 
Then the semilocal ring $\mcal{O}_{X,Gx}$ has a natural $G$-action which induces a $G$-action on the Henselian semilocal ring $\mcal{O}^h_{X,Gx}$ with a single closed orbit.
Any semilocal Henselian affine $G$-scheme over $k$ with a single orbit is equivariantly isomorphic to $\spec(\mcal{O}^h_{Y,Gy})$ for some affine $G$-scheme $Y$ and $y \in Y$.

For $X \in \Sch^G_k$ and any $x \in X$,
let $N_{G}(Gx)$ denote the filtering category of equivariant \'etale neighborhoods of $Gx$. Its objects are pairs $(p:U\to X, s)$, 
where $U$ is an equivariantly irreducible $G$-scheme, 
$p$ is an equivariant \'etale map, 
and $s:Gx\to U$ is an equivariant section of $p$ over $Gx$. 
A morphism from $(U\to X,s)$ to $(V\to X, s')$ in $N_{G}(Gx)$ is a map $f:U\to V$ making the evident triangles commute.
Although $x \in X$ might not be contained in any $G$-invariant affine neighborhood, 
it makes sense to consider $G \times^{G_x} \spec(\mcal{O}^h_{X,x})$ and according to \cite[Proposition 3.13]{HVO} we have:
\begin{equation} 
\label{eqn:Nispts1}
\lim_{U\in N_{G}(Gx)}U \iso \spec(\mcal{O}_{G\times^{G_{x}}X,Gx}^{h})
\iso 
G\times^{G_{x}}\spec(\mcal{O}_{X,x}^{h}).
\end{equation}
Further if $x\in X$ has an invariant affine neighborhood then there is a canonical $G$-isomorphism 
\begin{equation} \label{eqn:Nispts2}
G\times^{G_{x}}\spec(\mcal{O}^{h}_{X,x}) 
\xrightarrow{\iso} 
\spec(\mcal{O}^{h}_{X,Gx}).
\end{equation}

For a Nisnevich sheaf $F$ on $\Sm^G_k$, $X \in \Sm^G_k$, and $x \in X$, we set 
$$
p_x^*F := F(\spec(\mcal{O}^{h}_{G\times^{G_{x}}X,Gx})) = \colim_{U\in N_{G}(Gx)}F(U).
$$ 
Then $p_x^*$ defines a fiber functor from the category of sheaves to sets, 
i.e., 
it commutes with colimits and finite products and so determines a point of the $G$-equivariant Nisnevich topos. 
It is known that the set of points $\{p_x^* | x \in X, X \in \Sm^G_k \}$ forms a conservative set of points for $(\Sm^G_k)_{GNis}$ (see \cite[Theorem 3.14]{HVO}).

\subsection{Suslin homology of equivariant curves}

An equivariant map $p:X\to S$ is an equivariant curve if all of its fibers have dimension one.

\begin{definition}
\label{def:good}
Say that a smooth equivariant curve $p:X \to S$ admits a \textit{good  compactification} if $p$ factors as
\[
\xymatrix{
X \ar@{^{(}->}^{j}[r] \ar[dr]_-{p} & \overline{X} \ar[d]^-{\overline{p}} \\
& S,
}
\]
where $\overline{X}$ is normal, $\overline{p}$ is a proper equivariant curve, $j$ is an equivariant open embedding,  and $X_{\infty}=(\overline{X}\smallsetminus X)_{\red}$ has an invariant open affine neighborhood in $\overline{X}$. 
\end{definition}

The following lemma about base change is straightforward to verify. 

\begin{lemma}
\label{lem:bcofgdcpt}
Let $X \to S$ be an equivariant smooth curve and $S' \to S$ be an equivariant map, 
where $S, S'$ are affine $G$-schemes (smooth or a local or semilocal $G$-scheme which is a limit of smooth $G$-schemes).
If $X \to S$ admits an equivariant good compactification, 
then the smooth equivariant curve $X' = X \times_S S' \to S'$ also admits an equivariant good compactification.
\end{lemma}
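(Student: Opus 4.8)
The plan is to base change the given compactification and then normalize. Write $\overline{X}\to S$ for the good compactification of $X\to S$, with equivariant open immersion $j:X\hookrightarrow\overline{X}$, and let $U\subseteq\overline{X}$ be an invariant open affine neighborhood of $X_{\infty}=(\overline{X}\setminus X)_{\red}$. Set $Z:=\overline{X}\times_S S'$. Since all structure maps are equivariant, $Z$ carries a diagonal $G$-action, and the projection $\overline{p}_Z:Z\to S'$ is proper (base change of the proper map $\overline{p}$) with one-dimensional fibers, hence an equivariant proper curve over $S'$. Because $X$ is open in $\overline{X}$, the smooth curve $X'=X\times_S S'$ is identified with the invariant open subscheme of $Z$ given by the preimage of $X$ under $Z\to\overline{X}$.

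First I would normalize. Let $\nu:\overline{X}'\to Z$ be the normalization; it is a finite morphism, the relevant rings being excellent (this covers the smooth case as well as the local, semilocal, and Henselian limit cases allowed for $S'$), and by the universal property of normalization the $G$-action on $Z$ lifts uniquely to $\overline{X}'$, making $\nu$ equivariant. Thus $\overline{X}'$ is normal, and $\overline{p}':\overline{X}'\to S'$, being the composite of the finite map $\nu$ with the proper equivariant curve $\overline{p}_Z$, is again a proper equivariant curve. To obtain the open embedding $j':X'\hookrightarrow\overline{X}'$, note that $S'$ is regular in every case under consideration (smooth over $k$, or a regular (semi)local ring, regularity being preserved under Henselization), so the smooth $S'$-scheme $X'$ is regular, hence normal. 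Therefore every point of the open subscheme $X'\subseteq Z$ is a normal point of $Z$, the map $\nu$ restricts to an isomorphism over $X'$, and this realizes $X'$ as an invariant open subscheme of $\overline{X}'$.

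It remains to verify that $X'_{\infty}=(\overline{X}'\setminus X')_{\red}$ admits an invariant open affine neighborhood, which I would do by transporting $U$. The preimage $U\times_S S'$ of $U$ under $Z\to\overline{X}$ is an invariant open of $Z$ which is affine, being a fiber product of affine schemes over the affine base $S$, and it contains $Z\setminus X'$ since this is the preimage of $\overline{X}\setminus X\subseteq U$. Its preimage $\nu^{-1}(U\times_S S')$ is then invariant, open, and affine (as $\nu$ is finite, hence affine), and it contains $\overline{X}'\setminus X'\supseteq X'_{\infty}$. This is the required neighborhood, so $\overline{X}'\to S'$ is an equivariant good compactification of $X'\to S'$.

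The main obstacle is precisely the normality requirement: the naive base change $Z=\overline{X}\times_S S'$ need not be normal, so the normalization step cannot be avoided, and its legitimacy hinges on finiteness of normalization (excellence of the relevant rings, including the limit and Henselian (semi)local cases) together with the canonical lift of the $G$-action furnished by the universal property. Once normality is secured, regularity of $X'$ over the regular base $S'$ makes the open embedding and the affineness transport routine.
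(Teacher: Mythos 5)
Your proof is correct. The paper actually states this lemma without proof (calling it ``straightforward to verify''), and your argument --- base change the compactification, normalize (legitimately, by excellence of the rings in question), lift the $G$-action via the universal property, use regularity of $S'$ to recover the open immersion, and pull the invariant affine neighborhood back through the finite map $\nu$ --- is precisely the expected verification, matching the normalization technique the authors themselves use in the proofs of \aref{lem:Sus-Hom} and \aref{lem:useinrig}. The only microscopic refinement worth adding is that $Z=\overline{X}\times_S S'$ may fail to be reduced, so one should normalize $Z_{\red}$; this changes nothing, since $X'$ is reduced and open in $Z$, so it sits inside $Z_{\red}$ and your argument goes through verbatim.
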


If $S$ is affine and $X \to S$ is an equivariant smooth quasi-affine curve with equivariant good compactification $\overline{X}$ and $X_{\infty}=(\overline{X}\smallsetminus X)_{\red}$,
then the equivariant Suslin homology of $X/S$ can be interpreted in terms of relative equivariant Cartier divisors (see \cite[Theorem 3.1]{SV96} when $G$ is trivial, and \cite[Theorem 6.12]{HVO} for an extension to the equivariant case):
\begin{equation} 
\label{eqn:SusHom}
\HH^{\Sus}_{n}(G;X/S) 
\iso 
\begin{cases}
\Div^{G}_{rat}(\overline{X},X_{\infty})    & n=0 \\ 
0 & n>0.
\end{cases}
\end{equation}

\begin{lemma}
	Let $S= \lim_{\alpha\in A} S_{\alpha}$ be a cofiltered limit where the $S_{\alpha}$ are quasi-projective  $G$-schemes over $k$ and the transition maps are equivariant and affine. If $f:X\to S$ is a finite type equivariant map, then there is $\lambda$, a finite type $G$-scheme $X_{\lambda}$ over $k$, and an equivariant map $f_{\lambda}:X_\lambda\to S_{\lambda}$ fitting into a Cartesian square 
	\[
	\xymatrix{
		X \ar[r]\ar[d]_{f} & X_{\lambda} \ar[d]^{f_{\lambda}} \\
		S \ar[r] & S_{\lambda}.
	}
	\]
 Moreover if $f$ is satisfies any of the properties: (i) affine, (ii) open, (iii) smooth, (iv) proper,    then $f_\lambda$ can be chosen to have the same properties.  
\end{lemma}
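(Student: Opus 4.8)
The plan is to reduce to the classical limit theorems for schemes of Grothendieck (\emph{EGA} IV, \S\S 8, 17; see also the appendix of Thomason--Trobaugh) and then to descend the $G$-action as additional structure. Since each $S_\alpha$ is quasi-projective over the field $k$ it is Noetherian, hence quasi-compact and quasi-separated, and the transition maps are affine by hypothesis; so the hypotheses of the limit theorems are satisfied by the cofiltered system $(S_\alpha)$.

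First I would forget the $G$-actions and regard $f\colon X\to S$ as a finite type morphism of ordinary schemes. By \emph{EGA} IV, \S 8.8, there is an index $\lambda_0$, a finite type $S_{\lambda_0}$-scheme $X_{\lambda_0}$, and a morphism $f_{\lambda_0}\colon X_{\lambda_0}\to S_{\lambda_0}$ together with an identification of $X$ with the base change $X_{\lambda_0}\times_{S_{\lambda_0}} S$ over $f$. Moreover each property in the list descends after enlarging $\lambda_0$: affineness, properness, and open immersions by the descent results of \emph{EGA} IV, \S 8.10, and smoothness by \emph{EGA} IV, \S 17.7.8. Thus we may assume $f_{\lambda_0}$ has whichever of (i)--(iv) the map $f$ has.

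Next I would descend the $G$-action. The action on $S$ is the limit of the given actions on the $S_\alpha$, so each automorphism $g_S\colon S\to S$ is already the pullback of $g_{S_\alpha}\colon S_\alpha\to S_\alpha$. Because $G$ is finite, the action on $X$ is encoded by the finite family of isomorphisms $a_g\colon X\to X$, $g\in G$, each covering $g_S$. To descend a single $a_g$ I would reinterpret it as an $S$-morphism $\tilde a_g\colon X\to X^{(g)}$, where $X^{(g)}$ is the base change of $f$ along $g_S$; since base change commutes with the limit, $X^{(g)}$ is the pullback of the corresponding $S_{\lambda_0}$-scheme, and the morphism-descent statement of \emph{EGA} IV, \S 8.8 then yields, after increasing the index, a morphism $a_{g,\lambda}\colon X_\lambda\to X_\lambda$ covering $g_{S_\lambda}$ and restricting to $a_g$. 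There are only finitely many relations $a_e=\mathrm{id}_X$, $a_{gh}=a_g\circ a_h$, and $f\circ a_g=g_S\circ f$ to verify, each an equality of morphisms of finite type $S$-schemes holding over $S$; by the uniqueness clause of the limit theorem each holds at some finite level, so passing to a common index $\lambda\ge\lambda_0$ realizes all of them at level $\lambda$. This equips $X_\lambda$ with a $G$-action for which $f_\lambda$ is equivariant and the cartesian square is $G$-equivariant, while the properties (i)--(iv) persist since they concern only the underlying morphism $f_\lambda$.

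The step I expect to be the main obstacle is the descent of the action, where care is required because the $a_g$ do not cover the identity of $S$ but the $G$-action on $S$. This is exactly what the reinterpretation $\tilde a_g\colon X\to X^{(g)}$ is designed to circumvent, turning each $a_g$ into an honest morphism of $S$-schemes so that the standard limit theorem applies; finiteness of $G$ is then what reduces the whole problem to descending finitely many morphisms and checking finitely many relations at a single finite level.
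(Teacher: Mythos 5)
Your proposal is correct, and its first step (descending the underlying morphism and the properties (i)--(iv) via EGA~IV, Th\'eor\`eme~8.8.2, Th\'eor\`eme~8.10.5, and Proposition~17.7.8) is exactly what the paper does. But you handle the key difficulty --- that each $a_g\colon X\to X$ covers the nontrivial automorphism $g_S$ of $S$ rather than the identity --- in a genuinely different way. The paper's trick is to form the quotients $T_\alpha = S_\alpha/G$ (which exist precisely because the $S_\alpha$ are quasi-projective and $G$ is finite) and set $T=\lim_\alpha T_\alpha$; over $T$ the $G$-action on $X$ becomes a group homomorphism $G\to \Aut_T(X)$ into honest $T$-automorphisms, and the limit theorem applied over the system $(T_\alpha)$ gives $\Aut_T(X)\iso\colim_\alpha \Aut_{T_\alpha}(X_\alpha)$, so finiteness of $G$ lets the whole action factor through one finite stage in a single stroke. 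Your route instead twists: you replace $a_g$ by the $S$-morphism $\tilde a_g\colon X\to X^{(g)}$ into the base change of $f$ along $g_S$, descend each $\tilde a_g$ by the morphism clause of the limit theorem (here the identification of $X^{(g)}$ with the pullback of $X^{(g)}_{\lambda}$ rests on the equivariance of the projection $S\to S_{\lambda}$, not merely on ``base change commutes with limits'' --- worth saying explicitly), and then verify the finitely many relations $a_e=\mathrm{id}$, $a_{gh}=a_g\circ a_h$ at a common finite level using the injectivity clause. The trade-off: the paper's argument is shorter and delegates all the cocycle bookkeeping to the factorization of a homomorphism from a finite group through a filtered colimit of groups, but it genuinely uses quasi-projectivity of the $S_\alpha$ to form the quotients; your argument never needs quotients to exist, so it applies verbatim to any cofiltered system of quasi-compact quasi-separated $G$-schemes with affine equivariant transition maps, at the cost of checking the finitely many relations by hand.
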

\begin{proof}
Let $T_{\alpha} = S_{\alpha}/G$ and $T = \lim_{\alpha} T_{\alpha}$.
By \cite[Th\'eor\`eme 8.8.2]{EGAIV3}
there is $\beta$ and a map of finite type $T_{\beta}$-schemes $f_{\beta}:X_{\beta} \to S_{\beta}$ such that 	
$X\iso X_{\beta}\times_{S_{\beta}}S$ and under this isomorphism $f$ is the pullback of $f_\beta$. Moreover  if $f$ satisfies some of the properties (i)-(iv), then $f_{\beta}$ can be chosen to satisfy the same properties 
\cite[Th\'eor\`eme 8.10.5]{EGAIV3}, \cite[Proposition 17.7.8]{EGAIV4}. 
For $\alpha\geq \beta$, set $X_\alpha = X_{\beta}\times_{S_\beta}S_{\alpha}$.
We have that
$\Aut_{T}(X) \iso \colim_{\alpha} \Aut_{T_{\alpha}}(X_{\alpha})$. Since $G$ is finite, the homomorphism $G\to\Aut_T(X)$ factors through some $\Aut_{T_{\lambda}}(X_{\lambda})$, i.e., we may choose $X_{\lambda}$ to have a $G$-action.    Increasing $\lambda$ we can further assume that $f_\lambda$ is equivariant.
\end{proof}

\begin{lemma} \label{lem:Sus-Hom}
Let	 $S = \lim_{\alpha\in A}S_{\alpha}$ be a cofiltered limit where $S_\alpha\in \Sm^G_k$ are affine and the transition maps are equivariant \'etale. Let $X\to S$ be a smooth equivariant affine curve admitting good compactification. 
	\begin{enumerate}
		\item $\HH^{\Sus}_{n}(G;X/S) \iso \colim_{\beta} \HH^{\Sus}_{n}(G;X_{\beta}/S_{\beta})$ where $X_{\beta}\to S_{\beta}$ are smooth equivariant curves with good compactification. 
		\item $\HH^{\Sus}_{0}(G;X/S) \iso 
		\Div^{G}_{rat}(\overline{X}, X_{\infty})$ and $\HH^{\Sus}_{i}(G;X/S) = 0$ for $i>0$.
	\end{enumerate}
\end{lemma}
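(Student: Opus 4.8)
The plan is to reduce both assertions to the finite-type curves already covered by \eqref{eqn:SusHom} by a continuity argument along the cofiltered system, and then pass to the filtered colimit. First I would descend the data. Each $S_\alpha$ is affine and of finite type over $k$, hence quasi-projective, and since the $S_\alpha$ are affine the transition maps are automatically affine; thus the preceding lemma applies to $S=\lim_\alpha S_\alpha$. As $X\to S$ is of finite type, it produces an index $\lambda$ and a smooth equivariant affine map $X_\lambda\to S_\lambda$ pulling back to $X\to S$. Applying the same lemma to the proper map $\overline{X}\to S$, descending the open immersion $X\hookrightarrow\overline{X}$, and enlarging $\lambda$ so that all of this data becomes compatible, I would descend the whole good-compactification diagram $X_\lambda\hookrightarrow\overline{X}_\lambda\to S_\lambda$; the requirement that the boundary admit an invariant open affine neighborhood is an open condition and descends as well. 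Setting $X_\beta:=X_\lambda\times_{S_\lambda}S_\beta$ and $\overline{X}_\beta:=\overline{X}_\lambda\times_{S_\lambda}S_\beta$ for $\beta\geq\lambda$, \aref{lem:bcofgdcpt} ensures that each $X_\beta\to S_\beta$ is again a smooth equivariant affine curve admitting a good compactification, with $X=\lim_\beta X_\beta$ and $\overline{X}=\lim_\beta\overline{X}_\beta$.

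For part (1), the essential input is continuity of relative cycles: the presheaf $C_0(-/-)$ of finite surjective relative $0$-cycles carries the cofiltered limit $S=\lim_\beta S_\beta$ to the filtered colimit, so that $C_n(X/S)\iso\colim_\beta C_n(X_\beta/S_\beta)$ in every simplicial degree $n$ (this is the equivariant-curve incarnation of the Suslin--Voevodsky continuity for $c_{equi}(-/\spec(k),0)$ recalled in \aref{sub:SusHom}). Because $G$ is finite, the fixed-point functor $(-)^G$ is a finite limit and therefore commutes with the filtered colimit, giving $C_\bullet(X/S)^G\iso\colim_\beta C_\bullet(X_\beta/S_\beta)^G$ as complexes of abelian groups. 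Homology commutes with filtered colimits, so applying $H_n$ yields the claimed isomorphism $\HH^{\Sus}_n(G;X/S)\iso\colim_\beta\HH^{\Sus}_n(G;X_\beta/S_\beta)$.

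Part (2) then follows by feeding \eqref{eqn:SusHom} into part (1) stage by stage. Each $S_\beta$ is affine and $X_\beta\to S_\beta$ is a smooth quasi-affine curve with good compactification, so $\HH^{\Sus}_n(G;X_\beta/S_\beta)=0$ for $n>0$ and $\HH^{\Sus}_0(G;X_\beta/S_\beta)\iso\Div^G_{rat}(\overline{X}_\beta,X_{\beta,\infty})$. Hence the higher groups vanish in the colimit, while for $n=0$ it remains to identify $\colim_\beta\Div^G_{rat}(\overline{X}_\beta,X_{\beta,\infty})$ with $\Div^G_{rat}(\overline{X},X_\infty)$; since $\overline{X}=\lim_\beta\overline{X}_\beta$ along affine transition maps, the equivariant Cartier divisors relative to the boundary and the principal ones among them are each computed as filtered colimits, which gives the identification.

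I expect the main obstacle to be the continuity isomorphism $C_n(X/S)\iso\colim_\beta C_n(X_\beta/S_\beta)$ underpinning part (1): one must verify that every relative cycle finite and surjective over the limit base $S$ already arises from a finite stage, and that two such cycles agreeing over $S$ agree at some finite stage, that is, that the Suslin--Voevodsky theory of relative cycles is compatible both with the cofiltered limit and with passage to $G$-invariants. Descending the good compactification compatibly, rather than merely the bare curve, is a secondary bookkeeping issue handled by the preceding lemma together with standard limit arguments for morphisms of schemes.
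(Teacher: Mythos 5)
Your overall route is the same as the paper's: descend everything to a finite stage with the preceding limit lemma, prove (1) by continuity of the cycle complexes plus the fact that $G$-fixed points and homology commute with filtered colimits, and deduce (2) by applying \eqref{eqn:SusHom} at each stage and identifying the colimit of relative divisor groups. The gap is in the descent step. You assert that after descending $X\hookrightarrow\overline{X}\to S$ to stage $\lambda$ you may quote \aref{lem:bcofgdcpt}, but that lemma's hypothesis is that $X_\lambda\to S_\lambda$ is an equivariant smooth \emph{curve} admitting a \emph{good} compactification, and two parts of this do not descend automatically. First, the relative-dimension-one condition: the limit lemma gives a smooth affine $X_\lambda\to S_\lambda$ and a proper $\overline{X}_\lambda\to S_\lambda$ pulling back to $X$ and $\overline{X}$, but their fibers are only known to be one-dimensional over the image of $S$ in $S_\lambda$; in particular $\overline{X}_\lambda\to S_\lambda$ is neither smooth nor flat, so its fiber dimension can jump elsewhere and it need not be a curve at all. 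The paper repairs this with a dimension count at generic points, producing an open $U\subseteq S_\lambda$ over which both families have one-dimensional fibers; since $U$ contains the image of $S$, a quasi-compactness argument lets one increase $\lambda$ so that $S_\lambda$ maps into $U$. Second, normality: \aref{def:good} requires $\overline{X}_\lambda$ to be normal, and normality of the limit $\overline{X}$ does not force any finite stage to be normal. The paper repairs this by replacing $\overline{X}_\lambda$ with its normalization; this is harmless because normalization is finite (so properness, the one-dimensionality of fibers, and the invariant affine neighborhood of the boundary persist), $X_\lambda$ is regular and so sits inside the normalization as an invariant open, and since the transition maps are \'etale the normalization pulls back to the normalization at every $\beta\geq\lambda$ and to $\overline{X}$ itself in the limit.

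Once stage $\lambda$ is corrected in this way, the rest of your argument goes through and coincides with the paper's proof: $\colim_\beta C_n(X_\beta/S_\beta)\iso C_n(X/S)$, fixed points under the finite group $G$ and homology commute with filtered colimits, which gives (1); and (2) follows by applying \eqref{eqn:SusHom} stagewise (now legitimate, as each $X_\beta\to S_\beta$ has the good compactification $\overline{X}_\beta=\overline{X}_\lambda\times_{S_\lambda}S_\beta$), checking that the transition maps correspond to the maps of relative divisor groups under that isomorphism, and identifying $\colim_\beta\Div^{G}_{rat}(\overline{X}_\beta,Y_\beta)\iso\Div^{G}_{rat}(\overline{X},X_\infty)$.
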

\begin{proof}
Let $X\subseteq \overline{X}$ be an equivariant good compactification. 
By the previous lemma, there is a smooth, affine, equivariant map  $X_{\alpha}\to S_{\alpha}$, with equivariant compactification $\overline{X}_{\alpha}\to S_{\alpha}$ with $\overline{X}_{\alpha}\smallsetminus X_\alpha$ has an affine neighborhood, such that $X \iso X_\alpha\times_{S_\alpha}S$ and $\overline{X} \iso \overline{X}_\alpha\times_{S_\alpha}S$. 
For any generic point $\eta'\in X_{\alpha}$ lying over a generic point $\eta\in S_\alpha$, we have $\dim(\mcal{O}_{X_\alpha,\eta'}) = \dim(\mcal{O}_{S_\alpha,\eta}) +1$. Thus there
 is an open subset of $U\subseteq S_\alpha$ over which the fibers of $X_\alpha$, $\overline{X}_{\alpha}$ are one dimensional. Since $U$ contains the image of $S$ in $S_\alpha$, there is $\lambda\geq\alpha$ such that $X_\lambda$ and $\overline{X}_{\lambda}$ are equivariant  curves over $S_\lambda$, where $X_{\beta} = X_{\alpha}\times_{S_{\alpha}}S_{\beta}$ for $\beta\geq \alpha$ and similarly for $\overline{X}_{\beta}$. Replacing $\overline{X}_{\lambda}$ by its normalization, we see that $X_{\lambda}\to S_{\lambda}$ admits good compactification. We thus have that $X\to S$ is isomorphic to the cofiltered limit
$\lim_{\beta\geq \lambda}(X_{\beta}\to S_{\beta})$ of smooth affine equivariant curves admitting good compactification. Moreover, we have 
$\colim_{\beta}C_{n}(X_{\beta}/S_{\beta})\iso C_{n}(X/S)$ and taking fixed points and homology commutes with filtered colimits, yielding (1).

Write $X\to S$ as a filtered limit 
$\lim_{\beta\in B}(X_{\beta}\to S_{\beta})$ of equivariant curves with good compactification. Moreover we can assume $B$ has a minimal element $0$ and $\overline{X}_{\beta} = \overline{X}_{0}\times_{S_{0}}S_{\beta}$ is a good compactification of $X_{\beta}$. Write $Y_{\beta} = \overline{X}_{\beta}\smallsetminus X_{\beta}$. 
 Under the isomorphism \eqref{eqn:SusHom}, the map $\HH^{\Sus}_{0}(G;X_{\beta}/S_{\beta}) \to \HH_0^{\Sus}(G;X_{\alpha}/S_{\alpha})$ agrees with the map 
$\Div^{G}_{rat}(\overline{X}_{\beta},Y_{\beta}) \to \Div^{G}_{rat}(\overline{X}_{\alpha},Y_{\alpha})$ and so $\HH^{\Sus}_{n}(G;X/S) \iso \colim_{\beta} \Div^{G}_{rat}(\overline{X}_{\beta},Y_{\beta})$. Finally, note that 
$\colim_{\beta} \Div^{G}_{rat}(\overline{X}_{\beta},Y_{\beta})\iso  \Div^{G}_{rat}(\overline{X},X_{\infty})$.
\end{proof}

\begin{corollary}
Let $F$ be a homotopy invariant equivariant pseudo pretheory on $\Sm^G_k$ and 
$X\to S$ as in the statement of the previous lemma. Then there is a pairing of abelian groups
\[
\HH^{\Sus}_{0}(G;X/S)\otimes F(X) \to F(S).
\]
\end{corollary}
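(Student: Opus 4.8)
The plan is to construct the pairing at the level of divisors and then show that it descends. Using \aref{lem:Sus-Hom}(2) I identify $\HH^{\Sus}_0(G;X/S)$ with the relative rational equivalence group $\Div^G_{rat}(\overline{X},X_\infty)$, where $X\hookrightarrow\overline{X}$ is a good compactification and $X_\infty=(\overline{X}\smallsetminus X)_{\red}$. Every class is represented by a difference $D-D'$ of effective equivariant Cartier divisors on $\overline{X}$ whose supports lie in $X$, i.e.\ are disjoint from $X_\infty$; being closed in the proper relative curve $\overline{X}/S$ and avoiding $X_\infty$, such a divisor is finite over $S$ and surjective over a component, hence carries a transfer $\tr_D\colon F(X)\to F(S)$. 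I would then define $\langle [D-D'],a\rangle:=\tr_D(a)-\tr_{D'}(a)$. Linearity in $a\in F(X)$ is immediate since each $\tr$ is a homomorphism, so the entire issue is well-definedness and additivity in the first variable.

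Two facts drive the argument. First, transfers are invariant under relative rational equivalence: if effective relative divisors $D,D'$ become equal in $\Div^G_{rat}(\overline{X},X_\infty)$, then $\tr_D=\tr_{D'}$. This is the mechanism of \aref{lem:sustr}: a relative homotopy between $D$ and $D'$ is an effective relative divisor $W$ on $X\times\A^1$ which is finite and surjective over $S\times\A^1$, and its transfer $\tr_W\colon F(X\times\A^1)\to F(S\times\A^1)$ satisfies $i_t^*\circ\tr_W\circ p^*=\tr_{W_t}$, by compatibility of transfers with pullbacks (condition $(1)$ of \aref{def:pseudo}) together with homotopy invariance of $F$; since $i_0^*=i_1^*$ on $F(S\times\A^1)$ (both are inverse to the isomorphism $q^*$ induced by the projection $q\colon S\times\A^1\to S$), one obtains $\tr_{W_0}=\tr_{W_1}$. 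Second, transfers are additive on divisors with disjoint support: if $\supp D\cap\supp D'=\emptyset$, then the canonical $G$-invariant section $s_D$ of $\mcal{L}_D$ restricts to a nowhere-vanishing section of $\mcal{L}_D|_{D'}$, trivializing it equivariantly, so condition $(3)$ of \aref{def:pseudo} yields $\tr_{D+D'}=\tr_D+\tr_{D'}$.

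With these in hand I would prove well-definedness and bilinearity by putting divisors in general position. Given two representatives $D-D'$ and $E-E'$ of one class, so that $D+E'$ and $E+D'$ are relatively rationally equivalent, I move $D'$ and $E'$ within their relative rational equivalence classes to arrange $\supp D\cap\supp E'=\emptyset$ and $\supp E\cap\supp D'=\emptyset$; by the first fact the transfers are unchanged under such moves, by the second fact the sums split, and comparing with the first fact once more gives $\tr_D-\tr_{D'}=\tr_E-\tr_{E'}$. The same moving argument establishes $\langle\gamma_1+\gamma_2,a\rangle=\langle\gamma_1,a\rangle+\langle\gamma_2,a\rangle$, so the construction descends to a bilinear pairing $\HH^{\Sus}_0(G;X/S)\otimes F(X)\to F(S)$. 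For the stated generality $S=\lim_\alpha S_\alpha$, I would finally pass to the colimit via \aref{lem:Sus-Hom}(1), which exhibits $\HH^{\Sus}_0(G;X/S)$ as the filtered colimit of the groups attached to the smooth stages $X_\beta/S_\beta$, so that the pairings assemble compatibly with $F(X)=\colim F(X_\beta)$ and $F(S)=\colim F(S_\beta)$.

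The main obstacle is the equivariant moving step: producing, over the possibly non-semilocal base $S$, relatively rationally equivalent effective representatives with prescribed disjoint supports, together with the companion point that the homotopy $W$ witnessing a relative rational equivalence may be taken effective so that $\tr_W$ is defined. It is precisely here that one cannot shortcut through unconditional additivity of transfers, since even over a semilocal base $\Pic^G$ of a finite $S$-scheme need not vanish and $\mcal{L}_D|_{D'}$ can be nontrivial. Over a semilocal base this disjointness input is exactly what underlies \aref{lem:sustr}; in general I expect to reduce to that case through the colimit presentation of \aref{lem:Sus-Hom}(1) and the affineness of $X\to S$. Once disjointness is arranged, everything else is a formal consequence of the three transfer axioms and homotopy invariance.
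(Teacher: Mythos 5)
Your diagnosis of where the difficulty sits is exactly right, and the ingredients you get right are genuinely right: the identification of $\HH^{\Sus}_{0}(G;X/S)$ with $\Div^{G}_{rat}(\overline{X},X_{\infty})$ via \aref{lem:Sus-Hom}(2), the fact that the interpolation mechanism underlying \aref{lem:sustr} needs no additivity of transfers, and the observation that disjointness of supports makes axiom (3) of \aref{def:pseudo} applicable, because the canonical invariant section of $\mcal{L}_{D}$ trivializes $\mcal{L}_{D}|_{D'}$ equivariantly. For calibration: the paper states this corollary with no proof at all, so you are not deviating from a written argument; the question is whether the implicit one can be completed, and your proposal makes precise what it would take.

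The ingredient you defer --- the equivariant moving step --- is not an unproven technicality; it is false, and the bilinearity it was meant to deliver fails, so no refinement of this strategy can close the gap. Take $G=\Z/2\Z$, $k$ algebraically closed with $\ch(k)\neq 2$, $S=\spec(k)$, and $X=\A^{1}_{k}=\spec(k[t])$ with the action $t\mapsto -t$, compactified by $\P^{1}_{k}$ with $X_{\infty}=\{\infty\}$. The invariant cycle $\{t^{2}=sa^{2}\}$ on $X\times\Delta^{1}$ lies in $C_{1}(X/S)^{G}$, is finite and surjective over $\Delta^{1}$, and its faces are the non-reduced divisor $2[0]$ and the free orbit $O_{a}=\{a,-a\}$; hence the class of $O_{a}$ in $\HH^{\Sus}_{0}(G;X/S)$ is twice the class of the fixed point $[0]$. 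Now take $F=K_{0}^{G}$ with the transfers $\tr_{D}=p_{D*}\circ i_{D}^{*}$ of \aref{ex:K^G}. Writing $\sigma$ for the sign representation, one computes $\tr_{O_{a}}(\mcal{O}_{X})=[k[t]/(t^{2}-a^{2})]=1+\sigma$ (the regular representation), whereas $2\,\tr_{[0]}(\mcal{O}_{X})=2$, and $1+\sigma\neq 2$ in $K_{0}^{G}(\spec(k))=R(\Z/2\Z)$; moreover $\sigma-1$ remains nonzero modulo every $l>1$, so replacing $K_{0}^{G}$ by the torsion pseudo pretheory $K_{0}^{G}\otimes\Z/l\Z$ does not help, and the base here is already smooth, so neither does the colimit reduction via \aref{lem:Sus-Hom}(1). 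Consequently no pairing satisfying $\langle [D],a\rangle=\tr_{D}(a)$ on effective relative divisors can be additive in the first variable. Your moving lemma is precisely what this example forbids: any invariant effective divisor on this $X$ of odd degree contains $[0]$ with odd multiplicity, so two copies of the class of $[0]$ can never be represented by effective divisors with disjoint supports --- the fixed point cannot be moved. In fact your formula is ill-defined before rational equivalence even enters: the cycle $[0]+O_{a}-O_{b}$ splits both as $([0]+O_{a})-O_{b}$ and as $(2[0]+O_{a})-([0]+O_{b})$, and the corresponding transfer differences evaluated on $\mcal{O}_{X}$ are $1$ and $\sigma$, respectively. This is the phenomenon the paper isolates in \aref{rem:Gersten}: the transfer of a fixed point remembers the representation on the normal bundle, which the cycle class forgets. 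A correct version of the corollary therefore needs either additional hypotheses or a weaker compatibility between the pairing and the transfers; it cannot be obtained from \aref{lem:sustr}, conditional additivity, and general-position arguments.
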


\begin{proposition}\label{prop:Hsusinj}
	Let $S= \spec(\mcal{O}^{h}_{W,Gw})$ be the Henselization of a smooth affine $G$-scheme $W$ at an orbit $Gw$. 
	Let $p:X\to S$ be a smooth equivariant affine curve with an equivariant good compactification.
	Let $X_{0}\to S_{0}$ be the fiber over the closed orbit $S_{0}$ in $S$. 
	Then for any $n$ coprime to $\ch(k)$, 
	restriction induces an injection
	$$
	\HH_{0}^{\Sus}(G;X/S)/n \hookrightarrow \HH^{\Sus}_{0}(G;X_{0}/S_{0})/n.
	$$
\end{proposition}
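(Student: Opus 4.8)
The plan is to reinterpret both sides as relative equivariant Picard groups, reduce the comparison modulo $n$ to finite-coefficient equivariant \'etale cohomology by means of a relative Kummer sequence, and then invoke the equivariant proper base change theorem \aref{thm:basechange}. Throughout write $\overline{X}$ for the good compactification of $X/S$ and $X_{\infty}=(\overline{X}\smallsetminus X)_{\red}$, and let $\overline{X}_{0}=\overline{X}\times_{S}S_{0}$, which by \aref{lem:bcofgdcpt} is a good compactification of $X_{0}/S_{0}$, with $X_{0,\infty}=(\overline{X}_{0}\smallsetminus X_{0})_{\red}$. First I would combine the identification \eqref{eqn:SusHom}, $\HH_{0}^{\Sus}(G;X/S)\iso\Div^{G}_{\rat}(\overline{X},X_{\infty})$, with the natural map $\Div^{G}_{\rat}(\overline{X},X_{\infty})\to\Pic^{G}(\overline{X},X_{\infty})$ sending a relative divisor $D$ to $\mcal{L}_{D}$ with its canonical trivialization along $X_{\infty}$. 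This map is an isomorphism since $\overline{X}$ is normal and the base is semilocal Henselian, so together with \aref{thm:H90}(2) it yields
$$
\HH_{0}^{\Sus}(G;X/S)\iso\Pic^{G}(\overline{X},X_{\infty})\iso H^{1}_{\et}(G;\overline{X},\G_{\overline{X},X_{\infty}}),
$$
and likewise for $X_{0}/S_{0}$. Under these identifications the restriction map of the statement becomes the pullback along $\overline{X}_{0}\hookrightarrow\overline{X}$.

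Next I would establish the relative Kummer sequence. Write $\iota\colon X_{\infty}\hookrightarrow\overline{X}$ for the closed immersion, $\G_{\overline{X},X_{\infty}}=\ker(\mcal{O}^{*}_{\overline{X}}\to\iota_{*}\mcal{O}^{*}_{X_{\infty}})$, and $\mu_{n}^{\mathrm{rel}}=\ker(\mu_{n,\overline{X}}\to\iota_{*}\mu_{n,X_{\infty}})$. Applying the snake lemma to the restriction map between the Kummer sequences of $\mcal{O}^{*}_{\overline{X}}$ and of $\iota_{*}\mcal{O}^{*}_{X_{\infty}}$ — both exact as \'etale sheaves because $n$ is invertible and units and roots of unity extend \'etale-locally along $\iota$ — shows that
$$
0\to\mu_{n}^{\mathrm{rel}}\to\G_{\overline{X},X_{\infty}}\xrightarrow{\ \cdot n\ }\G_{\overline{X},X_{\infty}}\to 0
$$
is a short exact sequence of torsion \'etale $G$-sheaves on $\overline{X}$. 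The associated long exact sequence in equivariant \'etale cohomology produces an injection $H^{1}_{\et}(G;\overline{X},\G_{\overline{X},X_{\infty}})/n\hookrightarrow H^{2}_{\et}(G;\overline{X},\mu_{n}^{\mathrm{rel}})$, and the analogous injection for $\overline{X}_{0}$.

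Finally I would apply equivariant proper base change. The pair $(\mcal{O}^{h}_{W,Gw},I)$, with $I$ the invariant ideal of the closed orbit $S_{0}$, is a Henselian pair with $G$-action, and $\overline{p}\colon\overline{X}\to S$ is proper and equivariant with fiber $\overline{X}_{0}$ over $S_{0}$. Since $\mu_{n}^{\mathrm{rel}}$ is torsion, \aref{thm:basechange} gives $H^{2}_{\et}(G;\overline{X},\mu_{n}^{\mathrm{rel}})\iso H^{2}_{\et}(G;\overline{X}_{0},\mu_{n}^{\mathrm{rel},0})$, once one checks that restriction to the fiber carries $\mu_{n}^{\mathrm{rel}}$ to the corresponding relative sheaf $\mu_{n}^{\mathrm{rel},0}$ on $\overline{X}_{0}$; this follows from exactness of pullback together with finite base change for $\iota$, using $X_{0,\infty}=X_{\infty}\times_{\overline{X}}\overline{X}_{0}$. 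Assembling the two Kummer injections and this isomorphism into the evident commutative square, the right vertical arrow is an isomorphism and both horizontal arrows are injective; a diagram chase then shows that the left vertical arrow, which is the restriction $\HH_{0}^{\Sus}(G;X/S)/n\to\HH_{0}^{\Sus}(G;X_{0}/S_{0})/n$, is injective.

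I expect the main difficulty to lie in the second and third steps: verifying that the relative Kummer sequence is exact as a sequence of \'etale $G$-sheaves, with the $G$-linearizations preserved throughout, and confirming the base-change compatibility that restriction to the fiber sends $\mu_{n}^{\mathrm{rel}}$ to $\mu_{n}^{\mathrm{rel},0}$, so that \aref{thm:basechange} indeed identifies $H^{2}_{\et}(G;\overline{X},\mu_{n}^{\mathrm{rel}})$ with its fiber analog. The isomorphism $\Div^{G}_{\rat}\iso\Pic^{G}$ and the matching of the two notions of restriction are the remaining points needing care, although they are formal.
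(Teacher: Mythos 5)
Your proposal is correct and takes essentially the same route as the paper: the paper's proof reduces, via \aref{lem:Sus-Hom}(2) and \cite[Proposition~6.8]{HVO}, to injectivity of $\Pic^{G}(\overline{X},X_{\infty})/n \to \Pic^{G}(\overline{X}_{0},X_{0,\infty})/n$, and then cites the argument of \cite[Theorem~4.3]{SV96} with \'etale cohomology replaced by $H^{*}_{\et}(G;-)$ and classical proper base change replaced by \aref{thm:basechange}. The relative Kummer sequence and diagram chase you write out are exactly the content of that cited argument, so your proof simply makes explicit the details the paper leaves to the reference.
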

\begin{proof}
Let $\overline{X}$ be the equivariant good compactification of $X$ over $S$ such that 
$Y =(\overline{X}\smallsetminus X)_{\red}$ has an invariant open neighborhood in 
$\overline{X}$.
By Lemma~\ref{lem:Sus-Hom}(2) and \cite[Proposition~6.8]{HVO} it suffices to show that the restriction 
$\Pic^{G}(\overline{X}, Y)/n \to \Pic^{G}(\overline{X}_{0}, Y_{0})/n$ is injective. 
	This follows as in the proof of \cite[Theorem 4.3]{SV96}, 
	by replacing \'etale cohomology with $H^{*}_{et}(G;-)$ and classical proper base change with 
	\aref{thm:basechange}.
\end{proof}

\section{Rigidity for equivariant pseudo pretheories}
\label{sec:rigidity}
In this section we establish versions of the rigidity theorems of Suslin \cite{Sus83}, Gabber \cite{Gab92}, and Gillet and Thomason \cite{GT84} in the setting of equivariant pseudo pretheories.

\begin{theorem}[Equivariant Suslin Rigidity]\label{thm:suslinrigid}
Let $F$ be a homotopy invariant equivariant pseudo pretheory on $\Sm^G_k$ which takes values in torsion abelian groups 
of exponent coprime to $\ch(k)$. 
Let $S= \spec(\mcal{O}^{h}_{W,Gw})$ be the Henselization of a smooth affine $G$-scheme 
$W$ 
at a closed orbit, 
and $X\to S$ a smooth affine equivariant curve admitting good compactification.  
If $i_{1}, i_{2}:S\to X$ are two equivariant sections which coincide on the closed orbit of $S$,
then $i_{1}^{*} = i_{2}^{*}: F(X) \to F(S)$. 
\end{theorem}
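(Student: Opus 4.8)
The plan is to reduce the equality $i_1^* = i_2^*$ to a statement about Suslin homology, where we can apply the injectivity result of \aref{prop:Hsusinj}. The key observation is that the two sections $i_1, i_2 : S \to X$ determine two effective equivariant Cartier divisors $D(i_1) = \Gamma_{i_1}$ and $D(i_2) = \Gamma_{i_2}$ on $X$, each finite and surjective over $S$, and by axiom (2) of \aref{def:pseudo} we have $i_\ell^* = F(i_\ell) = \tr_{D(i_\ell)}$ for $\ell = 1,2$. Thus it suffices to prove that $\tr_{D(i_1)} = \tr_{D(i_2)}$, and by \aref{lem:sustr} this will follow once we show that the class of $D(i_1) - D(i_2)$ vanishes in $\HH_0^{\Sus}(G; X/S)$. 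Here I am using that $S$ is equivariantly irreducible (as a Henselization at a single closed orbit) and semilocal, so the hypotheses of \aref{lem:sustr} are met.

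Next I would exploit that $F$ is torsion of exponent coprime to $\ch(k)$, say killed by $n$. Since $\tr_D$ depends only on the class of $D$ in $\HH_0^{\Sus}(G;X/S)$, and $F(X)$ is $n$-torsion, it is enough to show that the image of $D(i_1) - D(i_2)$ vanishes in $\HH_0^{\Sus}(G;X/S)/n$. This is precisely where \aref{prop:Hsusinj} enters: restriction gives an injection
$$
\HH_0^{\Sus}(G;X/S)/n \hookrightarrow \HH_0^{\Sus}(G;X_0/S_0)/n,
$$
where $X_0 \to S_0$ is the fiber over the closed orbit. So it suffices to check that $D(i_1) - D(i_2)$ restricts to zero in $\HH_0^{\Sus}(G;X_0/S_0)/n$, and in fact I expect it restricts to zero already in $\HH_0^{\Sus}(G;X_0/S_0)$ itself.

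The final and most delicate step is the restriction to the closed orbit. By hypothesis $i_1$ and $i_2$ coincide on the closed orbit $S_0$ of $S$, so the restricted divisors $D(i_1)|_{X_0}$ and $D(i_2)|_{X_0}$ agree as equivariant cycles on $X_0$: both equal the graph of the common section $S_0 \to X_0$. Hence their difference is literally zero in $\Div^G(\overline{X}_0, (X_0)_\infty)$, and a fortiori zero in $\HH_0^{\Sus}(G;X_0/S_0)$ under the identification \eqref{eqn:SusHom}. Tracing this back through the injection of \aref{prop:Hsusinj} forces $D(i_1) - D(i_2)$ to vanish in $\HH_0^{\Sus}(G;X/S)/n$, which by the above chain of reductions yields $\tr_{D(i_1)} = \tr_{D(i_2)}$ and therefore $i_1^* = i_2^*$.

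I expect the main obstacle to be verifying carefully that the compatibility of transfers with pullbacks (axiom (1)) and base change for good compactifications (\aref{lem:bcofgdcpt}) make the restriction map on Suslin homology commute correctly with the formation of the divisors $D(i_\ell)$, so that ``restriction of the divisor'' and ``restriction on $\HH_0^{\Sus}$'' genuinely agree; once that bookkeeping is in place, the vanishing on the closed fiber is immediate from the coincidence of the sections there.
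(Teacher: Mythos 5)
Your proposal is correct and follows essentially the same route as the paper's proof: identify $i_\ell^*$ with the transfers along the graph divisors, reduce via \aref{lem:sustr} and the $n$-torsion hypothesis to vanishing of $D(i_1)-D(i_2)$ in $\HH_0^{\Sus}(G;X/S)/n$, and conclude from the injection of \aref{prop:Hsusinj} together with the fact that the two sections restrict to the same graph over the closed orbit. The only cosmetic differences are that the paper first reduces to the case $nF=0$ (writing $F=\cup_n \ker(n:F\to F)$) and assumes $X$ equivariantly irreducible, both trivial reductions that your argument absorbs implicitly.
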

\begin{proof}
For any $n$, $F_{n} = \ker(n:F\to F)$ is again a homotopy invariant equivariant pseudo 
pretheory and $F = \cup_{n} F_{n}$. 
Thus it suffices to consider the case when $nF = 0$. 
We may assume that $X$ is equivariantly irreducible.
The images of the sections $i_{j}$ are closed subschemes $W_{j}\subseteq X$ which are 
elements of $C_{0}(X/S)^{G}$. 
By definition we have $i_{j}^{*} = \tr_{W_{j}}$. 
By  \aref{lem:sustr} it suffices to show that $W_{1}-W_{2}$ 
becomes zero in $\HH_{0}^{\Sus}(G;X/S)/n$. 
 \aref{prop:Hsusinj} shows that there is an injection $\HH_{0}^{\Sus}(G;X/S)/n 
\hookrightarrow \HH_{0}^{\Sus}(G;X_{0}/S_{0})/n$, 
where $X_{0}$ is the fiber over the closed orbit $S_{0}$ of $S$.
Since $i_1$ and $i_{2}$ coincide on the closed orbit, 
we conclude that $W_{1}-W_{2}$ is zero in $\HH_{0}^{\Sus}(G;X/S)/n$. 

\end{proof}

Recall that we write $R\wr G$ for the skew group ring. 

\begin{lemma}\label{lem:etnbd}
Let  $X\to Z$ be a map in $\Sm^G_k$, with $X$ affine, $Z=\spec(L)$ where $L$ is a field, and $x\in X$ an invariant closed point such that $k(x)\iso L$. 
Then there is a commutative diagram in $\Sm^G_k$
\[
\xymatrix@C-8pt{
X \ar[rr]^{\phi}\ar[dr]& & \mathcal{V} \ar[dl] \\
& Z, &
}
\]
 where $\mathcal{V}$ is an equivariant vector bundle over $Z$, $\phi$ is \'etale at $x$, and $\phi(x) = 0$. 
 
\end{lemma}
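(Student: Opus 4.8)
The plan is to produce equivariant local coordinates at $x$ by averaging over $G$, mimicking the classical construction of an \'etale map to affine space and using that $|G|$ is invertible in $k$.

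Write $X=\spec(A)$ and let $\mathfrak{m}\subseteq A$ be the maximal ideal of the invariant point $x$, so that $A/\mathfrak{m}=k(x)\iso L$. Since $x$ is $G$-invariant, $G$ preserves $\mathfrak{m}$ and $\mathfrak{m}^2$, and the structure map $X\to Z$ makes $A$ a $G$-equivariant $L$-algebra (the $G$-action on $L=\mcal{O}_Z$ being possibly nontrivial). In particular $W:=\mathfrak{m}/\mathfrak{m}^2$ is a finite-dimensional $L$-vector space --- of dimension $\dim\mcal{O}_{X,x}$, as $\mcal{O}_{X,x}$ is regular --- carrying an $L$-semilinear $G$-action; equivalently $W$ is a finitely generated $L\wr G$-module, free over $L$. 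By \aref{rem:skew} this is precisely the data of an equivariant vector bundle over $Z$, and I set $\mcal{V}:=\spec(\Sym_L(W))$, with its evident map to $Z$, an equivariant vector bundle over $Z$ (relatively $\A(W^\vee)$) whose zero section I denote $0$.

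First I would construct a $G$-equivariant $L$-linear splitting $s\colon W\to\mathfrak{m}$ of the projection $\pi\colon\mathfrak{m}\onto W$. Choose any $L$-linear splitting $s_0$ of the short exact sequence $0\to\mathfrak{m}^2\to\mathfrak{m}\to W\to0$ of $L$-vector spaces and average: $s:=\tfrac{1}{|G|}\sum_{g\in G}g\circ s_0\circ g^{-1}$, where $g$ acts on $\mathfrak{m}\subseteq A$ and on $W$. A direct check using semilinearity shows that $s$ is again $L$-linear, and since $\pi$ is equivariant one gets $\pi\circ s=\mathrm{id}_W$, so $s$ is an equivariant section. The composite $W\xrightarrow{s}\mathfrak{m}\hookrightarrow A$ then extends uniquely to a $G$-equivariant $L$-algebra homomorphism $\phi^*\colon\Sym_L(W)\to A$, which defines the desired equivariant $Z$-morphism $\phi\colon X\to\mcal{V}$. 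Because $s(W)\subseteq\mathfrak{m}$, we have $\phi(x)=0$.

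It remains to check that $\phi$ is \'etale at $x$. As $Z$ is the spectrum of a field, both $X$ and $\mcal{V}$ are smooth over $Z$ (for $X$ this follows from $X$ being smooth over $k$ and $Z\to\spec(k)$ being \'etale), and the relative cotangent space of $X$ at $x$ is simply $\mathfrak{m}/\mathfrak{m}^2=W$, while that of $\mcal{V}$ at $0$ is again $W$. By construction the map induced by $\phi$ on these cotangent spaces sends $w\in W$ to $s(w)\bmod\mathfrak{m}^2=w$, i.e. it is the identity, hence an isomorphism. By the standard cotangent-space criterion for a morphism of smooth $Z$-schemes with $k(x)=k(0)=L$, this forces $\phi$ to be \'etale at $x$, completing the construction. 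The only genuinely equivariant input is the averaging step, and the point requiring care there is that $L$-linearity of $s$ survives averaging over an $L$-semilinear $G$-action --- which it does precisely because each $g$ is semilinear over the same automorphism of $L$ on source and target.
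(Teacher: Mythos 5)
Your proof is correct and follows essentially the same route as the paper: split the $L\wr G$-module surjection $\mathfrak{m}\to\mathfrak{m}/\mathfrak{m}^2$ (possible since $|G|$ is invertible), and let $\phi$ be the map induced by the resulting equivariant ring homomorphism $\Sym(\mathfrak{m}/\mathfrak{m}^2)\to A$. The only difference is expository: the paper simply asserts the splitting and leaves the \'etaleness at $x$ implicit, whereas you spell out the Maschke-style averaging (including the semilinearity point) and the cotangent-space criterion.
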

\begin{proof}
	Write $X= \spec(A)$ and $m\subseteq A$ for the maximal ideal corresponding to $x$. Since $|G|$ is invertible in $L$, the surjection of $L\wr G$-modules $m\to m/m^2$ has a splitting. The resulting map of $L\wr G$-modules $m/m^2 \to m \subseteq A$ induces the equivariant ring map $\Sym(m/m^2) \to A$. Applying $\spec$ yields the desired map.  
\end{proof}

\begin{lemma} \label{lem:useinrig}
Let $x\in X$ be an invariant closed point, $X\to \spec(L)$, and $\VV$ be as in the previous lemma. Assume that there is an equivariant vector bundle isomorphism $\VV\iso \WW\oplus \VV'$, where $\WW$ has rank $\dim(X) -1$, and let $p:X\to \WW$ be the resulting map. 
Then there are invariant open affine neighborhoods $U\subseteq X$ and $S\subseteq \WW$ of $x$ and $0$ respectively, such that $p$ induces a smooth equivariant curve $U\to S$ admitting good compactification. 
\end{lemma}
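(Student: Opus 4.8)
The plan is to first cut $p$ down to an honest smooth affine curve, and then build a good compactification by compactifying the ambient line bundle. Since $\phi\colon X\to\VV$ is $G$-equivariant and \'etale at the fixed point $x$, its \'etale locus is a $G$-invariant open containing $x$; choosing a small invariant affine open $S\subseteq\WW$ with $0\in S$ and then shrinking to an invariant affine open $U\ni x$ with $p(U)\subseteq S$ and $\phi|_U$ \'etale (possible as $X$ is affine), I obtain an equivariant map $p\colon U\to S$. The projection $\mathrm{pr}\colon\VV\to\WW$ is smooth, and since $\rank(\WW)=\dim X-1$ while $\phi$ is \'etale at $x$, the rank count gives $\rank(\VV')=\rank(\VV)-\rank(\WW)=1$, so $\mathrm{pr}$ has relative dimension one. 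Hence $p=\mathrm{pr}\circ\phi$ is, on $U$, a composite of an \'etale map and a smooth relative curve, giving the desired smooth affine equivariant curve $U\to S$. This is the routine part.

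Next I would produce the compactification. The projection $\mathrm{pr}$ realizes $\VV$ as the total space of a $G$-equivariant line bundle over $\WW$ (the pullback of $\VV'$), which I compactify fibrewise to a $G$-equivariant $\P^1$-bundle $\overline{\VV}\to\WW$ carrying invariant zero and infinity sections $s_0,s_\infty\colon\WW\to\overline{\VV}$ with $\VV=\overline{\VV}\setm s_\infty(\WW)$; the construction $\P(\,\cdot\oplus\mcal{O}_{\WW})$ is functorial, hence equivariant. Restricting over $S$ yields a proper equivariant curve $\overline{\VV}_S\to S$ with $\VV_S:=\VV\times_\WW S=\overline{\VV}_S\setm s_\infty(S)$. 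The \'etale map $\phi|_U\colon U\to\VV_S\hookrightarrow\overline{\VV}_S$ is quasi-finite and separated, so Zariski's main theorem factors it as an open immersion $U\hookrightarrow\overline{U}$ followed by a finite map $\pi\colon\overline{U}\to\overline{\VV}_S$; taking $\overline{U}$ to be the relative normalization of $\overline{\VV}_S$ in $U$ makes the construction canonical, hence $G$-equivariant, and makes $\overline{U}$ normal. Then $\overline{p}\colon\overline{U}\to S$ is a proper, normal, equivariant curve containing $U$ as a dense invariant open, and it remains only to check that $X_\infty:=(\overline{U}\setm U)_{\red}$ has an invariant affine neighborhood.

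The hard part is exactly this last condition. Since $\overline{p}$ is a proper relative curve, $X_\infty$ is closed and finite over the affine base $S$. The portion of $X_\infty$ lying over infinity is contained in $\pi^{-1}(\overline{\VV}_S\setm s_0(S))$, which is a $G$-invariant affine open of $\overline U$ (the complement of the zero section in a $\P^1$-bundle is a geometric line bundle, affine over $S$, and $\pi$ is finite). The subtlety is that $\phi|_U$ need not be finite onto $\VV_S$, so $X_\infty$ may also contain finitely many boundary points lying over the finite locus, as already happens for $\A^1\setm\{1\}\hookrightarrow\A^1$. To enclose all of $X_\infty$ in a single invariant affine open I would produce a $G$-invariant relatively ample effective Cartier divisor $D$ on $\overline{U}$ whose support is disjoint from $X_\infty$; then $\overline{U}\setm\supp(D)$ is affine, $G$-invariant, and contains $X_\infty$, yielding the good compactification required by \aref{def:good}. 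Constructing such a $D$ — choosing fibrewise an ample divisor avoiding the finitely many points of $X_\infty$ and assembling it into a $G$-invariant global divisor, using that $|G|$ is invertible to average and the equivariant compactification techniques of \cite{HVO} — is where the real work lies and is the step I expect to be the main obstacle.
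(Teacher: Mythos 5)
Your first two steps track the paper closely (the paper also shrinks to make $\phi$ \'etale, uses the compactification $\WW\times\P(\VV'\oplus\mcal{O}_L)$ of $\VV$, and invokes equivariant Zariski's main theorem plus normalization), apart from one small omission: you never arrange $S\subseteq p(U)$, and since the paper's definition of an equivariant curve demands that \emph{all} fibers have dimension one, your $U\to S$ could have empty fibers; this is fixable because $p$ is open. The genuine gap is that your proof stops exactly where the lemma's content begins: you explicitly defer the verification that $X_\infty=(\overline{U}\setm U)_{\red}$ has an invariant affine neighborhood, and the strategy you sketch for it does not go through as stated. Note that $X_\infty$ is not a finite set of points: it is finite \emph{over} $S$, hence of dimension $\dim S$, and the support of your sought-for relatively ample effective divisor $D$ would also have to be finite over $S$ and disjoint from $X_\infty$. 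The obstruction to producing such a $D$ is global, not equivariant: if $D$ exists then $\mcal{O}(D)|_{X_\infty}$ is trivial (the defining section of $D$ is nowhere zero on $X_\infty$), so you need a relatively ample line bundle on $\overline{U}$ restricting trivially to $X_\infty$ in $\Pic(X_\infty)$, and nothing in your construction provides one; fibrewise choices do not glue into a global divisor, and averaging over $G$ addresses only equivariance, not this Picard-type obstruction. This is precisely why ``admits a good compactification'' is a substantive hypothesis in rigidity arguments rather than an automatic property of any compactification.

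The idea you are missing --- and the way the paper proves the lemma --- is to \emph{shrink the base $S$} instead of hunting for a divisor over a fixed $S$. The paper first treats the model case of an invariant open $X\subseteq\VV$ with closure $\overline{X}=\WW\times\P(\VV'\oplus\mcal{O}_L)$ (your $\P^1$-bundle). Over the single point $0\in\WW$, the boundary $(\overline{X}\setm X)_0$ \emph{is} a finite set of closed points; a finite set of closed points in a scheme projective over an affine base always has an affine neighborhood, and intersecting the $G$-translates gives an invariant affine neighborhood $A$. The leftover boundary $Z=(\overline{X}\setm X)\setm \bigl((\overline{X}\setm X)\cap A\bigr)$ is closed in $\overline{X}$, hence by properness of $\overline{X}\to\WW$ has closed image in $\WW$, and this image misses $0$. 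Choosing the invariant affine $S\ni 0$ to avoid that image and to lie inside $p(X)$ forces the \emph{entire} boundary over $S$ into $A$; normalizing then yields the good compactification. The general case is reduced to this one: apply the model case to the open image $\phi(X)$ to get $M\to S$ with good compactification $\overline{M}$, then run your ZMT/relative-normalization argument for $U=\phi^{-1}(M)\to\overline{M}$, and take as invariant affine neighborhood the preimage, under the finite (hence affine) map $q:\overline{U}\to\overline{M}$, of the one already built inside $\overline{M}$. In short, the boundary points ``over the finite locus'' that you correctly identify as the crux are eliminated by shrinking $S$ so that they no longer lie over $S$ at all --- an option your argument forfeits by fixing $S$ and the compactification before confronting the affine-neighborhood condition.
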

\begin{proof}
First consider the case where $X\subseteq \VV$ is an invariant open subscheme with closure $\overline{X} = \WW\times \P(\VV' \oplus \mcal{O}_L)$.
For any $a\in X$, the fiber of $X_{p(a)}$ has dimension one and so  $(\overline{X}\setminus X)_{p(a)}$ must be finite over $p(a)$ (where $\overline{X}\setminus X$ is considered as a closed subscheme with reduced structure). 
Since $\overline{X}$ is projective over an affine scheme, there is an invariant affine neighborhood $A\subseteq \overline{X}$ of the finite set of closed points $(\overline{X}\setminus X)_{0}$. 
Then $Z= (\overline{X}\smallsetminus X) \smallsetminus ((\overline{X}\smallsetminus X) \cap A)$ is closed in $\overline{X}$ and so has closed image in $\WW$.  
Now let $S\subseteq \WW$ be an invariant affine neighborhood of $ 0$ which misses the image of $Z$ and is contained in $p(X)$ (we can find an affine neighborhood with these properties and the intersection over all the translates by $g\in G$ is an invariant neighborhood). 
Now let $U = X_{S}$ and $U' = \overline{X}_S$. Then  $U'\smallsetminus U$ has an invariant affine neighborhood. 
 Let $\overline{U}$ be the normalization of $U'$. Then $\overline{U}$ inherits a $G$-action from that on $U'$ and contains $U$ as an invariant open subscheme. Since $\overline{U}\to U'$ is finite, $\overline{U}\smallsetminus U$ is contained in an invariant affine neighborhood.  
Now $U\to S$ is a smooth equivariant curve with good compactification $\overline{U}$.

In the general case, since $\phi:X\to \VV$ is \'etale at $x$, there is an open invariant affine neighborhood on which $\phi$ is \'etale, so shrinking $X$, we may assume $\phi$ is \'etale.  By the previous paragraph, there are invariant affine neighborhoods $M\subseteq \phi(X)$ of $0$ and $S\subseteq \WW$  such that 
$M\to S$ is an equivariant smooth affine curve with good compactification $\overline{M}$. Then $U:=\phi^{-1}(M)\to \overline{M}$ 
 is equivariant and quasi-finite and so
the equivariant version of Zariski's main theorem (see \cite[Theorem~16.5]{LMB}) yields an 
equivariant factorization of $U \to \overline{M}$ as the composition of an invariant open immersion 
$U \hookrightarrow \overline{U}$ and an equivariant finite map $q: \overline{U} \to \overline{M}$. 
Replacing $\overline{U}$ by its normalization, 
we may assume $\overline{U}$ is normal.
Since $\overline{M}$ is an equivariant good compactification of $M$ over $S$ and $q$ is affine, 
it follows that $\overline{U}$ is an equivariant good compactification of $U$ over $S$.

\end{proof}

\begin{theorem}[Equivariant Gabber Rigidity]
\label{thm:gabberrigid}
Assume that every $G$-representation over $k$ is a direct sum of one dimensional representations.
Let $F$ be a homotopy invariant equivariant pseudo pretheory on $\Sm^G_k$ with torsion values of exponent coprime to $\ch(k)$. 
If $X$ is a smooth affine $G$-scheme over $k$ of pure dimension $d$ and $x\in X$ is a closed point such that $k\subseteq k(x)$ is separable,
then there is an isomorphism:
$$
F(Gx) \xrightarrow{\iso} F(\spec(\mcal{O}_{X,Gx}^{h})).
$$ 
\end{theorem}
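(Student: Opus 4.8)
The plan is to reduce to equivariant Suslin rigidity (\aref{thm:suslinrigid}) by the Suslin--Voevodsky/Gabber strategy: induct on $d=\dim X$, fibering $X$ near $x$ into a relative curve over a base of dimension $d-1$ and applying curve rigidity over the Henselized base. First I would reduce to the case of an \emph{invariant} point. Replacing $G$ by $H:=G_x$ and $F$ by $F^{H}:=F\circ(G\times^{H}-)\colon(\Sm^{H}_k)^{op}\to\Ab$, one checks, using $G\times^{H}(Y\times\A^1)\iso(G\times^{H}Y)\times\A^1$ and that $G\times^{H}-$ sends an $H$-equivariant relative smooth affine curve with $H$-Cartier divisor to the corresponding $G$-data, that $F^{H}$ is again a homotopy invariant equivariant pseudo pretheory with torsion values of the same exponent. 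The splitting hypothesis forces $kG\iso k^{|G|}$, i.e. $G$ is abelian and $k$ contains a primitive $\exp(G)$-th root of unity; both properties are inherited by the subgroup $H$ and the extension $k\subseteq k(x)$. Since $G\times^{H}\spec k(x)=Gx$ and $G\times^{H}\spec\mcal{O}^h_{X,x}\iso\spec\mcal{O}^h_{X,Gx}$ by \eqref{eqn:Nispts1}, \eqref{eqn:Nispts2}, it suffices to treat an invariant closed point $x$ with $G$-field $K:=k(x)$, so that $Gx=\spec K$.

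Write $S=\spec\mcal{O}^h_{X,x}$ and let $i\colon\spec K\hookrightarrow S$ be the closed point. After passing to an invariant \'etale neighborhood of $x$ (using separability of $k\subseteq K$ and $G$-invariance of $x$) I may assume there is an equivariant map $X\to\spec K$ inducing the identity on residue fields at $x$. Then \aref{lem:etnbd} gives an equivariant $\phi\colon X\to\VV=\A(V)$ over $\spec K$, \'etale at $x$, with $\phi(x)=0$, for a $G$-representation $V$ over $K$; the splitting hypothesis lets me write $V=W\oplus V'$ with $\dim W=d-1$, so $\VV\iso\WW\oplus\VV'$ with $\rank\WW=d-1$, and \aref{lem:useinrig} yields invariant open affines $U\ni x$, $S_0\ni 0$ in $\WW$ and a smooth equivariant curve $p\colon U\to S_0$ with equivariant good compactification. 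Henselizing the base, put $B=\spec\mcal{O}^h_{\WW,0}$ and $\mcal{C}=U\times_{S_0}B$, a smooth affine equivariant curve over the Henselian local $B$ with good compactification (\aref{lem:bcofgdcpt}), having $x\in\mcal{C}$ over the closed point $b$ of $B$ and $S=\spec\mcal{O}^h_{\mcal{C},x}$. The composite $\rho\colon S\xrightarrow{q}\mcal{C}\xrightarrow{\mathrm{pr}_B}B\to\spec K$ is an equivariant retraction of $i$, so $i^*\rho^*=\mathrm{id}$ on $F(\spec K)$, and it suffices to prove that $i^*\colon F(S)\to F(\spec K)$ is injective; the asserted natural isomorphism is then $\rho^*=(i^*)^{-1}$. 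The base case $d=0$ is immediate since there $\mcal{O}^h_{X,x}=K$.

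For injectivity, take $\alpha\in F(S)$ with $i^*\alpha=0$, represent it on a curve-fibration neighborhood $U$ as above, so that $\alpha=q^*\beta$ for the canonical $q\colon S\to\mcal{C}$ and some $\beta\in F(\mcal{C})$. Lifting the zero section of $\VV\to\WW$ along the \'etale map $\phi$ over the Henselian base $B$ produces, uniquely hence $G$-equivariantly, a section $\varsigma\colon B\to\mcal{C}$ with $\varsigma(b)=x$. The two sections $\delta=(q,\mathrm{id})$ and $\epsilon=(\varsigma\circ(S\to B),\mathrm{id})$ of the relative curve $\mcal{C}\times_B S\to S$ coincide on the closed orbit of $S$, so \aref{thm:suslinrigid} gives $\delta^*=\epsilon^*$; composing with pullback along $\mathrm{pr}_1\colon\mcal{C}\times_B S\to\mcal{C}$ yields $q^*=(S\to B)^*\varsigma^*$, whence $\alpha=(S\to B)^*(\varsigma^*\beta)$ factors through $F(B)$. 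By the inductive hypothesis applied to $\WW=\A(W)$ at the invariant point $0$ of dimension $d-1$, the map $F(\spec K)\to F(B)$ is an isomorphism with inverse the restriction $b^*$ to the closed point, so $\varsigma^*\beta=\beta_B^*\mu$ with $\mu:=b^*\varsigma^*\beta$ and $\beta_B\colon B\to\spec K$ the retraction. Then $\alpha=(S\to B)^*\beta_B^*\mu=\rho^*\mu$ and $0=i^*\alpha=(\rho i)^*\mu=\mu$, so $\alpha=\rho^*\mu=0$. This proves injectivity and completes the induction.

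The formal skeleton is the classical template, so I expect the real difficulty to be entirely equivariant: arranging every Hensel-type lift to be $G$-equivariant. The equivariant coefficient field underlying $\rho$ is exactly what forces the splitting hypothesis through \aref{lem:etnbd}, and the equivariant section $\varsigma$ must be produced so that $\varsigma\circ(S\to B)$ genuinely lies over $B$ — achieved here by lifting the equivariant zero section along $\phi$ and invoking uniqueness of the Henselian lift for equivariance. The remaining points needing care are the verification that $F^{H}$ is a pseudo pretheory and that the splitting hypothesis descends to $(H,k(x))$.
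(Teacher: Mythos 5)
Your skeleton is the paper's own: induct on $d$, reduce to an invariant point by replacing $(G,F)$ with $(G_x, F\circ(G\times^{G_x}-))$, use \aref{lem:etnbd} and \aref{lem:useinrig} to fiber a neighborhood of $x$ into an equivariant affine curve with good compactification over an invariant open of $\WW$, and apply \aref{thm:suslinrigid} to two sections over $S$ that agree on the closed orbit, invoking the induction hypothesis for $\WW$. However, one step fails as written: the claim that every $\alpha\in F(S)$ with $i^*\alpha=0$ is of the form $q^*\beta$ with $\beta\in F(\mcal{C})$, where $\mcal{C}=U\times_{S_0}B$ is built from the invariant open affine $U\subseteq X$ fixed \emph{before} $\alpha$ was chosen. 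By definition $F(S)=\colim_{W\in N_G(Gx)}F(W)$ runs over all equivariant \'etale neighborhoods of $x$, whereas $F(\mcal{C})=\colim_{\gamma}F(U\times_{S_0}Y_{\gamma})$ only sees the ``tubes'' over $U$, and these are not cofinal: every tube maps onto an open of $U$ containing the whole closed fiber $p^{-1}(0)\cap U$, so an invariant neighborhood such as $W=U\smallsetminus Gy$, for $y\neq x$ a closed point of $p^{-1}(0)\cap U$, receives no map from any tube over $X$. Hence a class of $F(S)$ represented only on such a $W$ need not lie in the image of $F(\mcal{C})\to F(S)$. The repair is exactly the paper's move: first choose an affine equivariant \'etale neighborhood $W\to\VV$ of $0_L$ carrying a representative of $\alpha$, and \emph{then} apply \aref{lem:useinrig} to $W$ itself (its map to $\VV$ is \'etale, which is the case handled in the second half of that lemma's proof), obtaining the curve fibration on a shrinking of $W$. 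With the curve adapted to the representative, your two-section argument over $S$ goes through.

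Two smaller points. First, the splitting $V=W\oplus V'$ is not literally an instance of the hypothesis: $\VV$ is an equivariant vector bundle over $\spec(K)$ on which $G$ may act nontrivially through $K$, i.e., a semilinear representation, while the hypothesis concerns $k$-representations; one needs the descent statement that $\VV\iso\A(V')_{L}$ for a representation $V'$ over $k$, which the paper takes from \cite[Theorem 8.11]{HVO}. Second, your Hensel-lifted section $\varsigma$ (with equivariance via uniqueness of the lift) is correct but unnecessary: the paper's second section is $\pi\circ i\circ p$, where $i:\spec(\mcal{O}^{h}_{\WW,0})\to\spec(\mcal{O}^{h}_{\VV,0})$ comes from functoriality of Henselization applied to $\WW\hookrightarrow\VV$, so no appeal to Hensel's lemma is needed. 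Your remaining reorganization --- proving injectivity of restriction to the closed point and factoring $\alpha$ through $F(B)$ --- is an equivalent bookkeeping of the paper's reduction to injectivity of $F(\spec(\mcal{O}^{h}_{\VV,0}))\to F(\spec(\mcal{O}^{h}_{\WW,0}))$, and your verification that the splitting hypothesis and the pseudo-pretheory structure descend to $G_x$ is a point the paper leaves implicit.
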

\begin{proof}
We proceed by induction on $d=\dim(X)$, the case $d=0$ being clear.
By \eqref{eqn:Nispts2}, there is an equivariant isomorphism
$$
G\times^{G_{x}} \spec(\mcal{O}_{X,x}^{h}) {\overset{\iso} \to} \spec(\mcal{O}_{X,Gx}^{h}).
$$ 
Thus we are reduced to showing there is an isomorphism 
$$
\epsilon^{*}F(\spec(k(x))) {\overset{\iso} \to} \epsilon^{*}F(\spec(\mcal{O}^{h}_{X,x})),
$$ 
where $\epsilon(-) = G\times^{G_{x}} (-)$ and $\epsilon^{*}F:=F\circ \epsilon$.  
Note that $\epsilon^{*}F$ is a homotopy invariant equivariant pseudo pretheory on $\Sm^{G_x}_k$ which is torsion of exponent coprime to $\ch(k)$. 
Replacing $G$ by $G_{x}$ and $F$ by $\epsilon^{*}F$ it suffices to consider the case where $Gx$ consists of a single point.

The projection $X_{x}\to X$ sends equivariant \'etale neighborhoods of $x\in X_{x}$ to equivariant \'etale neighborhoods of $x\in X$. If 
$U\to X$ is an equivariant \'etale neighborhood of $x\in X$, then $U_{x}\to X$ is an equivariant \'etale neighborhood of  
$x\in X$ mapping to $U$. This implies that  
$\spec(\mcal{O}^h_{X_{x},x}) \iso \spec(\mcal{O}^h_{X,x})$ and so we may replace $X$ with $X_{x}$ and assume there is an equivariant map $X\to \spec(L)$, where $L=k(x)$ (equipped with the corresponding $G$-action).    Furthermore, by \aref{lem:etnbd} there is an equivariant vector bundle $\mathcal{V}$ over $\spec(L)$ such that  
$\mcal{O}^{h}_{X,x} \iso \mcal{O}^{h}_{\mathcal{V},0}$ 
and so it suffices to assume 
$X = \mathcal{V}$ and $x=0_L\in \mathcal{V}$.

The assumption on $G$ implies that there is a representation $V'$ over $k$ and
an equivariant isomorphism $\mathcal{V} \iso \A(V')_{L}$, see e.g., 
the beginning of the proof of \cite[Theorem 8.11]{HVO}. In particular, $\VV$ is a direct sum of equivariant line bundles. Let $i:\WW\subseteq \VV$ be a rank $d-1$ summand. It now suffices to see that $i^*$ induces an isomorphism 
$F(\spec(\mcal{O}^h_{\VV,0}))\iso F(\spec(\mcal{O}^h_{\WW,0}))$, since  $0_L\in \mathcal{W}$ and the induction hypothesis implies that $F(\mathcal{W})\iso F(0_L)$. The inclusion $i$ is split by the projection $p:\VV \to \WW$, so it suffices to see that $i^*$ is injective.

Suppose that $[\alpha]\in F(\spec(\mcal{O}^h_{\VV,0}))$ is such that $i^*([\alpha])= 0$. By definition $F(\spec(\mcal{O}^h_{\VV,0})) = \colim_{U \to \VV} F(U)$, where the colimit is over equivariant \'etale neighborhoods of $0_L\in \VV$. Thus, there is a representative $\alpha \in F(U)$ of $[\alpha]$ where
$U \to \VV$  is
an affine equivariant \'etale neighborhood
  of $0_L$. There is a canonical equivariant map $\pi:\spec(\mcal{O}^h_{\VV,0}) \to U$.

After shrinking $U$, there is a smooth affine equivariant curve $U\to Y$, admitting a good compactification, 
by \aref{lem:useinrig},  where  $Y\subseteq \WW$ is an invariant neighborhood of $0$. 
Consider the following commutative diagram of equivariant maps:
\[
\xymatrix@C-6pt{
 \spec(\mcal{O}^h_{\VV,0}) \ar@/^/[drrr]^{s_i} \ar[dr]^{j_i} \ar@/_/[ddr]_{{\rm id}}& & &\\
 & \tilde{U} \ar[rr]^{q_2} \ar[d]^{q_1} &  & U \ar[d]\\ 
 & \spec(\mcal{O}^h_{\VV,0}) \ar[r]^-{p} & \spec(\mcal{O}^h_{\WW,0}) \ar[r] & Y,\\
}
\]
where the rectangle is a pullback.   By \aref{lem:bcofgdcpt}, $\tilde{U}\to \spec(\mcal{O}^h_{\VV,0})$
is a smooth affine equivariant curve admitting good compactification.
The maps $s_1:=\pi$ and  $s_2 := \pi \circ i \circ p$ induce 
equivariant sections $j_1, j_2 : \spec(\mcal{O}^h_{\VV,0})  \to \tilde{U}$ of $q_1$.
The sections $j_1, j_2$ agree on the closed orbit by construction and 
therefore $j_1^* = j_2^*$ by \aref{thm:suslinrigid}. Thus $[\alpha] = \pi^*\alpha = p^*i^*\pi^*\alpha = 0$.

\end{proof}

\section{On the equivariant Gersten resolution} 
\label{sec:Gerst-resol}
For an affine $G$-scheme $X \in \Sch^G_k$, 
let $\mcal{M}^G(X)$ denote the abelian category of $G$-equivariant coherent $\mcal{O}_X$-modules.
For $p \geq 0$, 
let $\mcal{M}^{G,p}(X) \subset \mcal{M}^G(X)$ denote the Serre subcategory of coherent sheaves $\mcal{F}$ whose support is a subscheme of codimension $\geq p$ in $X$.
Since $\mcal{F}$ is equivariant, 
the support is an invariant closed subscheme of $X$.
Let $S^{G,p}(X)$ denote the set of all distinct set-theoretic $G$-orbits $[x]$ in $X$ 
of codimension $p$ points $x$ of $X$. 
Consider the filtration of $\mcal{M}^G(X)$ by Serre subcategories
$$
\mcal{M}^G(X) = \mcal{M}^{G,0}(X) \supset \mcal{M}^{G,1}(X) \supset \mcal{M}^{G,2}(X) \supset \cdots \supset
\mcal{M}^{G,p}(X) \cdots.
$$
Since the natural exact functor 
$\mcal{M}^{G,p}(X) \to {\underset{[x] \in S^{G,p}(X)} \coprod} {\underset{n} \bigcup}  \mcal{M}^G 
(\spec( \mcal{O}_{X, Gx}/ J_{Gx}^n))$ has kernel $\mcal{M}^{G,p+1}(X)$ and
admits a section functor, by \cite[Proposition III.2.5]{Gab62} 
we have an equivalence of categories:
$$
\frac{\mcal{M}^{G,p}(X)}{\mcal{M}^{G,p+1}(X)} \xrightarrow{\simeq} 
{\underset{[x] \in S^{G,p}(X)} \coprod} {\underset{n} \bigcup}  \mcal{M}^G 
(\spec( \mcal{O}_{X, Gx}/ J_{Gx}^n)),
$$
where $J_{Gx}$ denotes the Jacobson radical of the semilocal ring $\mcal{O}_{X, Gx}$.
The Devissage theorem \cite[Theorem 4]{Qui73}, the Chinese remainder theorem 
and the equivalence of equivariant $K$-theory and $G$-theory for regular $G$-schemes 
\cite[Theorem 5.7]{T87}
imply that
$$ 
K_q^G({\underset{y \in [x]} \coprod} \spec(k(y))) \simeq 
G_q^G({\underset{y \in [x]} \coprod} \spec(k(y))) \simeq 
K_q(\mcal{M}^G (\spec( \mcal{O}_{X, Gx}/ J_{Gx}^n))),
$$
for every $n$. This yields an isomorphism 
for the union along all $n$.
Further for any $x \in X$, we have the Morita isomorphism \cite[Proposition 6.3]{T87}
$$
K_q^G({\underset{y \in [x]} \coprod} \spec(k(y))) \simeq K_q^{G_x}(\spec(k(x))).
$$
Combining the above and by \cite[Theorem 5]{Qui73}, 
for each $p \geq 0$ there is a localization sequence 
$$
\begin{array}{ll}
\cdots \rightarrow K_i(\mcal{M}^{G,p+1}(X)) \rightarrow K_i(\mcal{M}^{G,p}(X)) \rightarrow
& {\underset{[x] \in S^{G,p}(X)} \coprod} K_i^{G_x}(\spec(k(x))) \rightarrow \\
&  K_{i-1}(\mcal{M}^{G,p+1}(X)) \rightarrow \cdots.
\end{array}
$$

The above gives rise to a strongly convergent spectral sequence 
$$
E_1^{p,q} = {\underset{[x] \in S^{G,p}(X)} \coprod} K_{-p-q}^{G_x}(\spec(k(x))) 
\Rightarrow 
G^G_{-p-q}(X).
$$
For $X \in Sm^G_k$, the spectral sequence yields a sequence of abelian groups
\begin{equation} 
\label{eqn:SS1}
\begin{array}{ll}
0 \xrightarrow{} K_n^G(X) 
 \xrightarrow{} {\underset{[x] \in S^{G,0}(X)} \coprod} K_n^{G_x}(\spec(k(x)))  
& \xrightarrow{d_1} {\underset{[x] \in S^{G,1}(X)} \coprod} K_{n-1}^{G_x}(\spec(k(x))) 
\xrightarrow{d_1} \\
& {\underset{[x] \in S^{G,2}(X)} \coprod} K_{n-2}^{G_x}(\spec(k(x))) 
\xrightarrow{d_1} \cdots,
\end{array}
\end{equation}
where $d_1$ is the differential on the $E_1$-terms of the spectral sequence.

The Gersten conjecture states that \eqref{eqn:SS1} is exact if $G$ is trivial and $X = \spec(R)$, 
where $R$ is a regular local ring.
This is known for regular local rings containing a field, the geometric case was proved
by Quillen \cite[Theorem 5.11]{Qui73} and the general equicharacteristic case was proved
by Sherman \cite{Sh78} in the 1-dimensional case and Panin \cite{Pan03} for higher dimensions.
If $X$ is a regular local ring containing a field with a trivial $G$-action, where $G$ is a finite 
diagonalizable group, 
then the Gersten sequence \eqref{eqn:SS1} is simply the tensor product of the non-equivariant 
Gersten sequence with the group ring $\Z[G]$ (by \cite[Section 3.4]{Ser68}), and
is therefore exact. 
If the action of $G$ is non-trivial, 
we discuss in  \aref{ex:Gerst-resol} below that the sequence \eqref{eqn:SS1} need not be exact
even for $n=0$.

\begin{example} 
\label{ex:Gerst-resol}
Let $G = \Z/2\Z$ act on $X = \A^1_k = \spec(k[t])$ via the map $t \mapsto -t$.
For the closed point $x = (t) \in \A^1_k$ the Henselization $\mcal{O}^h_{X,x}$ is the ring of algebraic formal power series in $t$ over $k$.
We compute the $G$-equivariant $K_0$ with mod-$l$ coefficients of 
$\A^1_{(x)} := \spec (\mcal{O}_{X,Gx})$, 
$\spec (\mcal{O}^h_{X,Gx})$, 
the orbit $Gx$, 
and the generic point $\eta \in X$.

By \cite[Proposition~6.2]{T87} there is an isomorphism 
$$
K_0^G(Gx) \xrightarrow{\cong} K_0^{G_x}(\spec(k)),
$$
where the set-theoretic stabilizer $G_x$ of $x$ is equal to $G = \Z/2\Z$.
We have 
$$
K_0^G (\spec(k); l) 
\cong 
K_0^G(\spec(k)) \otimes \Z/l \Z 
\cong 
\Z/l\Z\oplus \Z/l\Z.
$$

Thus for a field $k$ of characteristic coprime to $2$, $l$,  \aref{thm:gabberrigid} implies
$$
K_0^G (\spec(\mcal{O}^h_{X,Gx}); l) 
\cong 
K_0^G (Gx; l) 
\cong 
\Z/l\Z \oplus \Z/l\Z.
$$ 

The natural map $\pi: \A^1_{(x)} \to \spec(k)$ affords a $G$-equivariant factorization:
\[
\xymatrix@R=.5em{
\A^1_{(x)}  \ar[rr]^-{\pi} \ar[dr]_-{j} & & \spec(k) \\
& \A^1_k \ar[ur]_-{\pi_1}.&\\}
\]
Here $j^*: K_0^G(\A^1_k) \to K_0^G(\A^1_{(x)})$ is surjective by the localization exact sequence, 
and $\pi_1^*: K_0^G(\spec(k)) \to K_0^G(\A^1_k)$ is an isomorphism \cite[Theorems 2.7, 5.7, 4.1]{T87}.
It follows that $\pi^*: K_0^G(\spec(k)) \to K_0^G(\A^1_{(x)})$ is surjective.
Since $\pi: \A^1_x \to \spec(k)$ has an equivariant section given by $t \mapsto 0$, 
$\pi^*: K_0^G(\spec(k)) \to K_0^G(\A^1_{(x)})$ is also injective.
Therefore $K_0^G(\A^1_{(x)}; l) \cong K_0^G(\spec(k); l) \cong \Z/l\Z \oplus \Z/l\Z$.
 
For the generic point $\eta = \spec(k(t))$, 
note that the $G$-action on $k(t)$ is free and $k(t)^G = k(t^2)$. 
Therefore,
$K_0^G(\eta; l) \cong K_0(k(t^2)) \otimes \Z/l\Z \cong \Z/l\Z$ so that 
$K_0^G(\A^1_{(x)}; l) \ncong K_0^G(\eta; l)$. 
\end{example}

\begin{remark} \label{rem:Gersten}
As pointed out by the referee, the Gersten complex for $\A^1_{(x)}$ with action of
the group $G = \Z/2\Z$ given by $t \mapsto -t$ as in the above example can be
analyzed using the localization sequence as follows. 
Under the notations of example \ref{ex:Gerst-resol}, we get an exact sequence:
$$
\cdots \to K_1^G(\spec(k(t)) \xrightarrow{\partial} K_0^G(\spec(k)) \xrightarrow{x_*}
K_0^G(\A^1_{(x)}) \xrightarrow{\eta^*} K_0^G(\spec(k(t))).
$$
Now the closed point $x \in \A^1_{(x)}$ can be seen as the zero set of the diagonal section
of the line bundle $L = \A^1_{(x)} \times \A^1_{k} \to \A^1_{(x)}$,
where $\A^1_{k}$ has the above non-trivial $G$-action. By a variant of the
excess intersection formula for equivariant $K$-theory \cite[Theorem 3.8]{Koc98},
$x_*(1) = 1 - [L]$, and this class is non-zero in $K_0^G(\A^1_{(x)})$. Thus 
$\eta^*$ is not injective. The above considerations give the geometric reason for this:
as soon as the top Chern class (in equivariant $K$-theory of the point) of the normal bundle
is non-trivial, then $x_*$ is non-zero and $\eta^*$ is not injective. In the cases
considered in other articles, the normal bundle has trivial action, so the top Chern class is zero
and the map $\eta^*$ is injective.
\end{remark}

The rigidity property and the exactness of the Gersten sequence \eqref{eqn:SS1} are two important properties of algebraic $K$-theory of semilocal rings.
In  \aref{ex:K^G} and  \aref{thm:gabberrigid}, 
we prove the rigidity theorem for equivariant $K$-theory of schemes with finite group actions.
 \aref{ex:Gerst-resol} (see also \cite[Section 5.3]{nguyen16}) shows that the Gersten sequence is not exact for equivariant $K$-theory of semilocal rings with non-trivial $\Z/2\Z$-actions.
In this respect the cases of trivial and non-trivial actions are very different.

\bibliography{EquiRig_July3}

\providecommand{\bysame}{\leavevmode\hbox to3em{\hrulefill}\thinspace}
\providecommand{\MR}{\relax\ifhmode\unskip\space\fi MR }
\providecommand{\MRhref}[2]{%
  \href{http://www.ams.org/mathscinet-getitem?mr=#1}{#2}
}
\providecommand{\href}[2]{#2}
\begin{thebibliography}{MFK94}

\bibitem[AD]{AD}
A.~Ananyevskiy and A.~Druzhinin, \emph{Rigidity for linear framed presheaves
  and generalized motivic cohomology theories}, arXiv:1704.03483.

\bibitem[Ayo14]{MR3205601}
J.~Ayoub, \emph{La r\'ealisation \'etale et les op\'erations de
  {G}rothendieck}, Ann. Sci. \'Ec. Norm. Sup\'er. (4) \textbf{47} (2014),
  no.~1, 1--145. \MR{3205601}

\bibitem[CD16]{MR3477640}
D.-C. Cisinski and F.~D\'eglise, \emph{{\'E}tale motives}, Compos. Math.
  \textbf{152} (2016), no.~3, 556--666. \MR{3477640}

\bibitem[CR62]{CR}
C.~Curtis and I.~Reiner, \emph{Representation theory of finite groups and
  associative algebras}, Pure and Applied Mathematics, Vol. XI, Interscience
  Publishers, a division of John Wiley \& Sons, New York-London, 1962.
  \MR{0144979 (26 \#2519)}

\bibitem[Del09]{Del09}
P.~Deligne, \emph{Voevodsky's lectures on motivic cohomology 2000/2001},
  Algebraic topology, Abel Symp., vol.~4, Springer, Berlin, 2009, pp.~355--409.
  \MR{2597743}

\bibitem[FS02]{FS02}
E.~M. Friedlander and A.~Suslin, \emph{The spectral sequence relating algebraic
  {$K$}-theory to motivic cohomology}, Ann. Sci. \'Ecole Norm. Sup. (4)
  \textbf{35} (2002), no.~6, 773--875. \MR{1949356}

\bibitem[Gab62]{Gab62}
P.~Gabriel, \emph{Des cat\'egories ab\'eliennes}, Bull. Soc. Math. France
  \textbf{90} (1962), 323--448. \MR{0232821}

\bibitem[Gab92]{Gab92}
O.~Gabber, \emph{{$K$}-theory of {H}enselian local rings and {H}enselian
  pairs}, Algebraic {$K$}-theory, commutative algebra, and algebraic geometry
  ({S}anta {M}argherita {L}igure, 1989), Contemp. Math., vol. 126, Amer. Math.
  Soc., Providence, RI, 1992, pp.~59--70. \MR{1156502}

\bibitem[Gab94]{Gab94}
\bysame, \emph{Affine analog of the proper base change theorem}, Israel J.
  Math. \textbf{87} (1994), no.~1-3, 325--335. \MR{1286833}

\bibitem[Gro66]{EGAIV3}
A.~Grothendieck, \emph{{\'E}l\'ements de g\'eom\'etrie alg\'ebrique. {IV}.
  \'etude locale des sch\'emas et des morphismes de sch\'emas. {III}}, Inst.
  Hautes \'Etudes Sci. Publ. Math. (1966), no.~28, 255. \MR{0217086}

\bibitem[Gro67]{EGAIV4}
\bysame, \emph{{\'E}l\'ements de g\'eom\'etrie alg\'ebrique. {IV}. \'etude
  locale des sch\'emas et des morphismes de sch\'emas {IV}}, Inst. Hautes
  \'Etudes Sci. Publ. Math. (1967), no.~32, 361. \MR{0238860}

\bibitem[GT84]{GT84}
H.~Gillet and R.~W. Thomason, \emph{The {$K$}-theory of strict {H}ensel local
  rings and a theorem of {S}uslin}, Proceedings of the {L}uminy conference on
  algebraic {$K$}-theory ({L}uminy, 1983), vol.~34, 1984, pp.~241--254.
  \MR{772059}

\bibitem[HK{\O}15]{HKO15}
J.~Heller, A.~Krishna, and P.~A. {\O}stv{\ae}r, \emph{Motivic homotopy theory
  of group scheme actions}, J. Topol. \textbf{8} (2015), no.~4, 1202--1236.
  \MR{3431674}

\bibitem[HV{\O}15]{HVO}
J.~Heller, M.~Voineagu, and P.~A. {\O}stv{\ae}r, \emph{Equivariant cycles and
  cancellation for motivic cohomology}, Doc. Math. \textbf{20} (2015),
  269--332. \MR{3398714}

\bibitem[HV{\O}16]{HVO16}
\bysame, \emph{Topological comparison theorems for {B}redon motivic
  cohomology}, arXiv preprint arXiv:1602.07500 (2016).

\bibitem[HY07]{HY07}
J.~Hornbostel and S.~Yagunov, \emph{Rigidity for {H}enselian local rings and
  {$\Bbb A^1$}-representable theories}, Math. Z. \textbf{255} (2007), no.~2,
  437--449. \MR{2262740}

\bibitem[Jan]{UJ}
U.~Jannsen, \emph{Rigidity theorems for {K}- and {H}-cohomology and other
  functors}, arXiv:1503.08742.

\bibitem[K{\"o}c98]{Koc98}
B.~K{\"o}ck, \emph{The {G}rothendieck-{R}iemann-{R}och theorem for group scheme
  actions}, Ann. Sci. \'Ecole Norm. Sup. (4) \textbf{31} (1998), no.~3,
  415--458. \MR{1621405}

\bibitem[Kri10]{Kri10}
A.~Krishna, \emph{Gersten conjecture for equivariant {$K$}-theory and
  applications}, Math. Ann. \textbf{347} (2010), no.~1, 123--133. \MR{2593287}

\bibitem[LMB00]{LMB}
G.~Laumon and L.~Moret-Bailly, \emph{Champs alg\'ebriques}, Ergebnisse der
  Mathematik und ihrer Grenzgebiete. 3. Folge. A Series of Modern Surveys in
  Mathematics [Results in Mathematics and Related Areas. 3rd Series. A Series
  of Modern Surveys in Mathematics], vol.~39, Springer-Verlag, Berlin, 2000.
  \MR{1771927}

\bibitem[LS08]{LS08}
M.~Levine and C.~Serp\'e, \emph{On a spectral sequence for equivariant
  {$K$}-theory}, $K$-Theory \textbf{38} (2008), no.~2, 177--222. \MR{2366561}

\bibitem[MFK94]{MFK}
D.~Mumford, J.~Fogarty, and F.~Kirwan, \emph{Geometric invariant theory}, third
  ed., Ergebnisse der Mathematik und ihrer Grenzgebiete (2) [Results in
  Mathematics and Related Areas (2)], vol.~34, Springer-Verlag, Berlin, 1994.
  \MR{1304906}

\bibitem[Nes14]{MR3443257}
A.~Neshitov, \emph{A rigidity theorem for presheaves with
  {$\Omega$}-transfers}, Algebra i Analiz \textbf{26} (2014), no.~6, 78--98.
  \MR{3443257}

\bibitem[Ngu16]{nguyen16}
M.~T. Nguyen, \emph{On the equivariant motivic spectral sequences}, Ph.D.
  thesis, Dissertation, Duisburg, Essen, Universit{\"a}t Duisburg-Essen, 2016.

\bibitem[Pan03]{Pan03}
I.~A. Panin, \emph{The equicharacteristic case of the {G}ersten conjecture},
  Tr. Mat. Inst. Steklova \textbf{241} (2003), no.~Teor. Chisel, Algebra i
  Algebr. Geom., 169--178. \MR{2024050}

\bibitem[PY02]{PY02}
I.~Panin and S.~Yagunov, \emph{Rigidity for orientable functors}, J. Pure Appl.
  Algebra \textbf{172} (2002), no.~1, 49--77. \MR{1904229}

\bibitem[Qui73]{Qui73}
D.~Quillen, \emph{Higher algebraic {$K$}-theory. {I}}, 85--147. Lecture Notes
  in Math., Vol. 341. \MR{0338129}

\bibitem[Ray70]{Ray06}
M.~Raynaud, \emph{Anneaux locaux hens\'eliens}, Lecture Notes in Mathematics,
  Vol. 169, Springer-Verlag, Berlin-New York, 1970. \MR{0277519}

\bibitem[R{\O}06]{RO06}
A.~Rosenschon and P.~A. {\O}stv{\ae}r, \emph{Rigidity for pseudo pretheories},
  Invent. Math. \textbf{166} (2006), no.~1, 95--102. \MR{2242633}

\bibitem[R{\O}08]{MR2399164}
O.~R\"ondigs and P.~A. {\O}stv{\ae}r, \emph{Rigidity in motivic homotopy
  theory}, Math. Ann. \textbf{341} (2008), no.~3, 651--675. \MR{2399164}

\bibitem[Ser68]{Ser68}
J.-P. Serre, \emph{Groupes de {G}rothendieck des sch\'emas en groupes
  r\'eductifs d\'eploy\'es}, Inst. Hautes \'Etudes Sci. Publ. Math. (1968),
  no.~34, 37--52. \MR{0231831}

\bibitem[She78]{Sh78}
C.~C. Sherman, \emph{The {$K$}-theory of an equicharacteristic discrete
  valuation ring injects into the {$K$}-theory of its field of quotients},
  Pacific J. Math. \textbf{74} (1978), no.~2, 497--499. \MR{0480484}

\bibitem[Sus83]{Sus83}
A.~Suslin, \emph{On the {$K$}-theory of algebraically closed fields}, Invent.
  Math. \textbf{73} (1983), no.~2, 241--245. \MR{714090}

\bibitem[Sus03]{Sus03}
\bysame, \emph{On the {G}rayson spectral sequence}, Tr. Mat. Inst. Steklova
  \textbf{241} (2003), no.~Teor. Chisel, Algebra i Algebr. Geom., 218--253.
  \MR{2024054}

\bibitem[SV96]{SV96}
A.~Suslin and V.~Voevodsky, \emph{Singular homology of abstract algebraic
  varieties}, Invent. Math. \textbf{123} (1996), no.~1, 61--94. \MR{1376246}

\bibitem[Tab]{Tab}
G.~Tabuada, \emph{Noncommutative rigidity}, Math. Z. (2017).
  https://doi.org/10.1007/s00209-017-1998-5.

\bibitem[Tho87]{T87}
R.~W. Thomason, \emph{Algebraic {K}-theory of group scheme actions}, Algebraic
  topology and algebraic K-theory (Princeton, NJ, 1983) \textbf{113} (1987),
  539--563.

\bibitem[Voe00a]{MR1764200}
V.~Voevodsky, \emph{Cohomological theory of presheaves with transfers}, Cycles,
  transfers, and motivic homology theories, Ann. of Math. Stud., vol. 143,
  Princeton Univ. Press, Princeton, NJ, 2000, pp.~87--137. \MR{1764200}

\bibitem[Voe00b]{Voev00-1}
\bysame, \emph{Cohomological theory of presheaves with transfers}, Cycles,
  transfers, and motivic homology theories, Ann. of Math. Stud., vol. 143,
  Princeton Univ. Press, Princeton, NJ, 2000, pp.~87--137. \MR{1764200}

\bibitem[Voe00c]{Voev00}
\bysame, \emph{Triangulated categories of motives over a field}, Cycles,
  transfers, and motivic homology theories, Ann. of Math. Stud., vol. 143,
  Princeton Univ. Press, Princeton, NJ, 2000, pp.~188--238. \MR{1764202}

\bibitem[Voe10]{Voev10}
\bysame, \emph{Homotopy theory of simplicial sheaves in completely decomposable
  topologies}, J. Pure Appl. Algebra \textbf{214} (2010), no.~8, 1384--1398.
  \MR{2593670}

\bibitem[Wal85]{W85}
F.~Waldhausen, \emph{Algebraic {K}-theory of spaces}, Algebraic and geometric
  topology, Springer, 1985, pp.~318--419.

\bibitem[Wal96]{Wal96}
M.~E. Walker, \emph{Motivic complexes and the {K}-theory of automorphisms},
  ProQuest LLC, Ann Arbor, MI, 1996, Thesis (Ph.D.)--University of Illinois at
  Urbana-Champaign. \MR{2694778}

\bibitem[Yag11]{MR2854332}
S.~Yagunov, \emph{Remark on rigidity over several fields}, Homology Homotopy
  Appl. \textbf{13} (2011), no.~2, 159--164. \MR{2854332}

\bibitem[Y{\O}09]{YO09}
S.~Yagunov and P.~A. {\O}stv{\ae}r, \emph{Rigidity for equivariant
  {$K$}-theory}, C. R. Math. Acad. Sci. Paris \textbf{347} (2009), no.~23-24,
  1403--1407. \MR{2588790}

\end{thebibliography}
\bibliographystyle{amsalpha}

\end{document}